\documentclass[11pt]{article}

\usepackage{amsmath, amsthm, amssymb, mathtools}
\usepackage{booktabs}
\usepackage[hidelinks, colorlinks=false]{hyperref}
\usepackage{cleveref}
\crefformat{maintheorem}{Main Theorem}
\Crefformat{maintheorem}{Main Theorem}
\crefname{proposition}{Proposition}{Propositions}
\Crefname{proposition}{Proposition}{Propositions}
\crefname{lemma}{Lemma}{Lemmas}
\Crefname{lemma}{Lemma}{Lemmas}
\crefname{conjecture}{Conjecture}{Conjectures}
\Crefname{conjecture}{Conjecture}{Conjectures}

\newtheorem{theorem}{Theorem}[section]
\newtheorem*{maintheorem}{Main Theorem}
\newtheorem{lemma}[theorem]{Lemma}
\newtheorem{proposition}[theorem]{Proposition}

\theoremstyle{definition}

\newtheorem{remark}[theorem]{Remark}
\newtheorem{conjecture}[theorem]{Conjecture}

\title{Domination in Johnson graphs \texorpdfstring{$J(n, 3)$}{J(n,3)} for odd \texorpdfstring{$n$}{n}}

\author{%
Seung-ah Lee\thanks{Department of Mathematics, Pusan National University, Busan 46241, Republic of Korea. Email: \texttt{sal0403@pusan.ac.kr}.}
\and
Semin Oh\thanks{Corresponding author. KNU G-LAMP Research Center, KNU Institute of Basic Sciences, Kyungpook National University, Daegu 41566, Republic of Korea. Email: \texttt{semin@knu.ac.kr}.}%
}

\date{\today}

\begin{document}

\maketitle

\begin{abstract}
In 2025 Cornet, Dravec, and Torres determined the domination number $\gamma(J(n, 3))$ of the Johnson graph for every even $n \ge 6$, expressing it as a closed form $\phi_n$ in terms of Fort\textendash{}Hedlund covering numbers, and conjectured the same value for odd $n$. We prove this conjecture: $\gamma(J(n, 3)) = \phi_n$ for every odd $n \ge 7$, completing the determination of $\gamma(J(n, 3))$ for all $n \ge 6$.
\end{abstract}

\noindent\textbf{Keywords:} Johnson graph; domination number; Fort\textendash{}Hedlund covering; triangle-free graph; edge-covering by triangles.

\smallskip
\noindent\textbf{MSC 2020:} 05C69; 05C35; 05D05; 05B40.

\section{Introduction}

The \emph{Johnson graph} $J(n, k)$ has vertex set $\binom{[n]}{k}$, with two $k$-subsets adjacent if and only if they share exactly $k - 1$ elements. Johnson graphs appear naturally in the study of constant-weight codes~\cite{BrouwerShearerSloaneSmith1990}, covering and packing designs~\cite{ColbournDinitz2007}, and Erd\H{o}s\textendash{}Ko\textendash{}Rado type extremal set theory~\cite{ErdosKoRado1961}. They form one of the most-studied families of distance-regular graphs~\cite{BrouwerCohenNeumaier1989}.

Many parameters of Johnson graphs have received extensive attention: the independence number through the Erd\H{o}s\textendash{}Ko\textendash{}Rado theorem~\cite{ErdosKoRado1961}, the chromatic number by Etzion and Bitan~\cite{EtzionBitan1996}, the automorphism group by Ramras and Donovan~\cite{RamrasDonovan2011}, and the competition number by Kim, Park, and Sano~\cite{KimParkSano2010}. The domination number, by contrast, has remained largely open until recently. The closely related Kneser graphs $K(n, k)$ use disjointness, rather than large intersection, as the adjacency relation. Their domination has been investigated by Ivan\v{c}o and Zelinka~\cite{IvancoZelinka1993}, by Gorodezky~\cite{Gorodezky2007}, and by \"{O}sterg\aa{}rd, Shao, and Xu~\cite{OstergardShaoXu2014}. The Roman domination number of Johnson graphs was studied by Zec~\cite{Zec2023}, but the closed-form determination of the ordinary domination number $\gamma(J(n, k))$, even for fixed small $k$, was addressed only by the recent work of Cornet, Dravec, and Torres~\cite{CornetDravecTorres2025}.

The case $J(n, 1) = K_n$ is trivial. For $J(n, 2) \cong L(K_n)$, an elementary argument gives $\gamma(J(n, 2)) = \lceil n/2 \rceil$ for every $n \ge 4$~\cite{HaynesHedetniemiHenningCore2023}. Cornet, Dravec, and Torres~\cite{CornetDravecTorres2025} determined $\gamma(J(n, 3))$ for every even $n \ge 6$ and conjectured the analogous closed form for odd $n$. Both statements are expressed in terms of a residue-class quantity $\phi_n$ obtained from Fort\textendash{}Hedlund covering numbers; we give the precise definition of $\phi_n$ at~\eqref{eq:phi-odd} in~\Cref{sec:prelim}. We first recall their result and conjecture.

\begin{theorem}[\cite{CornetDravecTorres2025}]\label{thm:CDT-even}
For every even integer $n \ge 6$,
\[
\gamma(J(n, 3)) \;=\; \phi_n.
\]
\end{theorem}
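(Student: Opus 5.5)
The theorem is quoted from Cornet, Dravec, and Torres, so here I only sketch how I would reprove it. The plan is two matching bounds, $\gamma(J(n,3)) \le \phi_n$ by construction and $\gamma(J(n,3)) \ge \phi_n$ by a counting and structural argument. Both rest on one observation. A vertex $\{a,b,c\}$ is dominated by a set $D$ of triples exactly when it lies in $D$ or some member of $D$ contains two of $a,b,c$. Equivalently, view $D$ as a $3$-uniform hypergraph, and let $G_D$ be the graph on $[n]$ whose edges are the pairs covered by members of $D$. Then $D$ dominates $J(n,3)$ if and only if every triple outside $D$ contains at least one edge of $G_D$. In other words, the complement graph $\overline{G_D}$ may contain a triangle only if that triangle is itself a member of $D$. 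Since members of $D$ are triangles of $G_D$, not of $\overline{G_D}$, the condition is simply that $\overline{G_D}$ is triangle-free.

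So the problem becomes: cover the edges of a graph $H=G_D$ by triangles, with $\overline H$ triangle-free, using as few triangles as possible. For the upper bound I would take $\overline H$ to be a complete bipartite graph $K_{n/2,n/2}$, the natural extremal triangle-free graph. Then $H$ is two disjoint cliques $K_{n/2}$, and covering each clique by triangles costs the Fort–Hedlund number $C(n/2,3,2)$. This gives $\gamma \le 2C(n/2,3,2)$, or more generally the bipartition $a+b=n$ giving $C(a,3,2)+C(b,3,2)$. I expect $\phi_n$ to be the minimum of this over the relevant residue-class choices of $a$, possibly using slightly unbalanced parts. I would check directly that this minimum matches the stated closed form.

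For the lower bound, take any dominating $D$ and let $H=G_D$. By Mantel, $\overline H$ has at most $n^2/4$ edges, so $H$ has at least $\binom n2-n^2/4$ edges. Each triangle covers at most $3$ edges, which gives a first bound of roughly $n^2/12$, agreeing with $2C(n/2,3,2)$ to leading order. To pin down the exact value I would argue by stability. If $\overline H$ is far from complete bipartite, then $H$ has many extra edges and the count already exceeds $\phi_n$. If $\overline H$ is close to bipartite with parts $A,B$, I would charge each triangle of $D$ to the part containing at least two of its vertices. Then the triangles must cover $K_A$ and $K_B$ up to missing edges that are compensated by cross edges. A local refinement of the Fort–Hedlund bound $\lceil \frac{m}{3}\lceil\frac{m-1}{2}\rceil\rceil$, applied separately inside $A$ and inside $B$ with degree-parity defects accounted for, would give at least $C(|A|,3,2)+C(|B|,3,2)$.

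The main obstacle is this exact lower bound, not the asymptotics. The Fort–Hedlund number has ceilings depending on $m \bmod 6$, so the defects and the mixed triangles (two vertices in one part, one in the other) must be counted exactly. My first approach would be a vertex-degree argument. For each vertex $v$, the triangles of $D$ through $v$ must cover all $H$-neighbors of $v$, which needs at least $\lceil \deg_H(v)/2\rceil$ triangles. Summing over $v$ and dividing by $3$ gives a bound, and the parity ceilings at odd-degree vertices produce the residue-class corrections. The even-$n$ case should be where the balanced split makes this summation tight.
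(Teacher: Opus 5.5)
Your reformulation is exactly \Cref{prop:bijection}, and your upper bound is the two-clique construction behind \Cref{prop:phi-upper}. The paper does not reprove the even case; it cites it, so the question is whether your lower bound stands on its own. It does not yet. ``Far'' and ``close'' are never quantified. The ``local refinement of Fort--Hedlund with parity defects'' is asserted rather than proved; when $\overline H$ has edges inside $A$, so that $H[A]$ is not a clique, there is no reason for a part-by-part bound $\ge |C_{|A|}|$ at all. And the one concrete tool you propose provably falls short. Summing $\lceil \deg_H(v)/2\rceil$ over all $v$ and dividing by $3$ merges the Fort--Hedlund ceilings of the two cliques. Take $n\equiv 10 \pmod{12}$ and $H=K_{n/2}\sqcup K_{n/2}$, which is admissible and extremal with $\rho_\Delta(H)=2|C_{n/2}|=\phi_n$. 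Every degree $n/2-1$ is even, so your bound is $\lceil n(n-2)/12\rceil=\phi_n-1$; at $n=10$ it gives $7$ rather than $8$. The ceiling has to be taken on each part separately, and for that you first need to know that the triangles split between the parts.

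The missing idea is that for even $n$ the stability step is nearly free, so no defect bookkeeping is needed. Mantel with \eqref{eq:3D-EG} already gives $\lceil n(n-2)/12\rceil$. This equals $\phi_n$ for $n\equiv 2,4,6 \pmod{12}$ and $\phi_n-1$ otherwise, so at most one extra triangle has to be found. Now split exactly on whether $\overline H$ is bipartite.

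If it is not, \Cref{lem:nonbip-trianglefree} gives $|E(\overline H)|\le n(n-2)/4+1$. Hence $|E(H)|\ge n^2/4-1$ and $\rho_\Delta(H)\ge\lceil (n^2-4)/12\rceil\ge\phi_n$, since $a_n\le 16$ in \eqref{eq:phi-even}.

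If $\overline H$ is bipartite with parts $A,B$, both are cliques of $H$; this is the setting of \Cref{lem:CDT9}. Each triangle of $D$ has at least two of its vertices in exactly one part. When $|A|,|B|\ge 3$, replace each triangle with exactly two vertices in $A$ by any triangle of $K_A$ through its $A$-edge. The triangles charged to $A$ then form a triangle cover of $K_A$, so there are at least $|C_{|A|}|$ of them, and likewise for $B$. This gives $\rho_\Delta(H)\ge|C_{|A|}|+|C_{|B|}|\ge\phi_n$ directly from the definition of $\phi_n$. When $\min(|A|,|B|)\le 2$, one has $|E(H)|\ge\binom{n-2}{2}+1$, and \eqref{eq:3D-EG} suffices because $(n-4)^2\ge a_n$ for every even $n\ge 6$.
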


\begin{conjecture}[\cite{CornetDravecTorres2025}]\label[conjecture]{conj:CDT-odd}
For every odd integer $n \ge 7$,
\[
\gamma(J(n, 3)) \;=\; \phi_n.
\]
\end{conjecture}

Our main result is the following.

\begin{maintheorem}\label[maintheorem]{thm:main}
For every odd integer $n \ge 7$,
\[
\gamma(J(n, 3)) \;=\; \phi_n.
\]
\end{maintheorem}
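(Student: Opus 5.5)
We prove the two inequalities $\gamma(J(n,3)) \le \phi_n$ and $\gamma(J(n,3)) \ge \phi_n$ separately, following the structure of the even case in \Cref{thm:CDT-even}.

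\textbf{Reformulation.} A family $\mathcal{D}\subseteq\binom{[n]}{3}$ dominates $J(n,3)$ exactly when every triple $T\notin\mathcal{D}$ shares a pair with some member of $\mathcal{D}$. Split $[n]$ into a part $A$ and its complement $B=[n]\setminus A$, and take the triples of $\mathcal{D}$ inside $A$ to form a Fort\textendash{}Hedlund covering of the pairs of $A$. Then every triple meeting $A$ in at least two points is dominated. The remaining triples, those with at most one point in $A$, must be handled by triples of $\mathcal{D}$ that use $B$. Equivalently, the graph $G$ of pairs in $B$ left uncovered must be triangle-free, together with a suitable cross condition. This accounts for the keywords ``triangle-free graph'' and ``edge-covering by triangles''.

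\textbf{Upper bound.} I would take the explicit construction of Cornet, Dravec, and Torres for even $n$ and adapt it to odd $n$. For each residue class of $n$ modulo $6$ (or $12$), choose the sizes $|A|$ and $|B|$ as prescribed by the definition \eqref{eq:phi-odd} of $\phi_n$. Take a Fort\textendash{}Hedlund covering $C(|A|)$ on $A$, and a small explicit family on $B$, for instance a near-bipartite pattern together with a few bridging triples. The count then gives exactly $\phi_n$. Small cases such as $n=7,9,11$ I would check by hand or by computer.

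\textbf{Lower bound.} Let $\mathcal{D}$ be a minimum dominating set, and call a point \emph{heavy} if its link graph (the pairs $\{a,b\}$ with $\{x,a,b\}\in\mathcal{D}$) has few uncovered pairs. Count the triples $\{x,y,z\}$ that are not dominated. This forces the graph $H$ on $[n]$ of pairs not contained in any member of $\mathcal{D}$ to be triangle-free; otherwise a triangle of $H$ would give an undominated triple. By Mantel's theorem, $H$ has at most $n^2/4$ edges. More precisely, $H$ is triangle-free and $\mathcal{D}$ covers all edges of the complement $\overline{H}$. Each triple of $\mathcal{D}$ covers at most $3$ pairs, so $|\mathcal{D}|\ge e(\overline{H})/3$, and this can be sharpened by Fort\textendash{}Hedlund-type degree arguments. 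If $H$ is bipartite with parts $X$ and $Y$, the complement $\overline{H}$ contains the cliques on $X$ and on $Y$, and covering them needs roughly $C(|X|)+C(|Y|)$ triples minus savings. Optimizing over $|X|+|Y|=n$ gives $\phi_n$.

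\textbf{Main obstacle.} The difficulty is the general triangle-free case and the parity effects for odd $n$. When $n$ is odd, Fort\textendash{}Hedlund numbers $C(m)=\lceil \frac m3\lceil\frac{m-1}2\rceil\rceil$ with $m$ even incur extra rounding, which is presumably why the even-$n$ proof does not transfer directly. I would handle this by a stability argument. First, show that a minimal $\mathcal{D}$ can be modified, without increasing its size, so that $H$ becomes bipartite or nearly so. This uses local switching of triples and the fact that each vertex of odd degree in $\overline{H}[X]$ forces an extra covering triple. Then, in each residue class, run a careful rounding analysis comparing $C(a)+C(n-a)$ and the cross terms against $\phi_n$. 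I expect the residue classes where $\phi_n$ is attained with slack zero, and the small cases $n\le 13$, to require the most delicate case analysis.
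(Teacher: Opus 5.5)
Your reformulation is correct: your $H$ is the complement of the paper's $2$-shadow $G$, so $\alpha(G)\le 2$, and $|\mathcal D|\ge e(\overline H)/3$. But the lower bound, which is the entire content of the theorem, is only a programme.

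The central step is to modify a minimum $\mathcal D$ by ``local switching'' until $H$ is bipartite. No mechanism is given for it, and nothing in your sketch shows how a switch would avoid increasing $|\mathcal D|$. More importantly, no stability or switching is needed. The idea you are missing is the second-extremum bound for triangle-free graphs (\Cref{lem:nonbip-trianglefree}): a non-bipartite triangle-free graph on $n$ vertices has at most $\lfloor (n-1)^2/4\rfloor+1$ edges. So for odd $n$ and non-bipartite $H$ you get $e(\overline H)\ge\binom n2-\frac{(n-1)^2}{4}-1=\frac{n^2-5}{4}$. Hence $|\mathcal D|\ge\lceil (n^2-5)/12\rceil\ge\phi_n$, using only $b_n\le 12$. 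The crude count by $3$ therefore already suffices whenever $H$ is non-bipartite, and no residue-by-residue rounding is required.

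The paper uses the same bound in a different setting. It applies it to the complement of $G-\{u,v\}$ for a non-adjacent pair inside its Case~B, combined with $3\rho_\Delta(G)\ge|E(G)|$, and uses a coupling argument when that complement is bipartite. Its Case~A reduces to $V(G)$ or $V(G)-\{w\}$ being a union of two cliques.

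Your bipartite case is also not yet an argument. If $H$ is bipartite with parts $X,Y$, then $\overline H$ is $K_X\cup K_Y$ plus cross edges. Triples of $\mathcal D$ with two points in $X$ and one in $Y$ cover edges of $K_X$, so ``$C(|X|)+C(|Y|)$ minus savings'' is exactly what must be proved. This is Lemma~9 of \cite{CornetDravecTorres2025} (\Cref{lem:CDT9}), valid when both parts have size at least $2$. Since $\phi_n$ only ranges over parts of size at least $3$, the splits $|Y|=0$ and $|Y|=1$ need separate arguments: $|C_n|\ge\phi_n$ (\Cref{lem:clique-case}) and the edge count of \Cref{lem:singleton-degenerate}.

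Your upper bound is needlessly vague. Take Fort\textendash{}Hedlund coverings of two disjoint cliques with $m_1+m_2=n$ attaining the minimum that defines $\phi_n$. This gives exactly $\phi_n$ triples, and the shadow $K_{m_1}\sqcup K_{m_2}$ has independence number $2$. No ``near-bipartite pattern with bridging triples'' or computer check is needed, and the pattern you describe would require its own domination check and count.
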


Combined with~\Cref{thm:CDT-even}, this completes the closed-form determination of $\gamma(J(n, 3))$ for every $n \ge 6$.

The odd case is more delicate than the even one: the disjoint-clique decomposition that suffices for even $n$ is no longer extremal, and the lower bound must control graphs $G$ that interpolate between the two regimes. The matching upper bound is provided by an explicit 2-clique construction. The lower bound proceeds within the bijective framework of Cornet, Dravec, and Torres~\cite{CornetDravecTorres2025} and turns on a classical second-extremum bound for triangle-free non-bipartite graphs (\Cref{lem:nonbip-trianglefree}). This bound is applied through a coupling argument that propagates a local edge-count hypothesis across non-adjacent pairs in $V(G)$.

The remainder of the paper is organised as follows.~\Cref{sec:prelim} collects notation, recalls the bijective framework of~\cite{CornetDravecTorres2025}, gives the Fort\textendash{}Hedlund formula and the closed form of $\phi_n$, and exhibits the explicit $2$-clique construction realising the upper bound.~\Cref{sec:proof} establishes the matching lower bound and combines it with the upper bound to prove~\Cref{thm:main}. We leave open the extremal characterisation of the $2$-shadow graphs at which the minimum in~\Cref{prop:bijection} is attained, returning to this point in the concluding remarks.

\section{Preliminaries}\label{sec:prelim}

Throughout, all graphs are finite, undirected, simple, and loopless. For a graph $G = (V(G), E(G))$, an \emph{independent set} is a vertex subset spanning no edges, and the \emph{independence number} $\alpha(G)$ is the maximum size of such a set. A \emph{dominating set} of $G$ is a vertex subset $D \subseteq V(G)$ such that every vertex of $G$ either lies in $D$ or has a neighbour in $D$, and the \emph{domination number} $\gamma(G)$ is the minimum size of a dominating set; we refer to~\cite{HaynesHedetniemiHenningCore2023} for general background on domination. We write $N(v)$ for the open neighbourhood of $v$, $G - S$ for the induced subgraph on $V(G) \setminus S$, $\overline{G}$ for the complement, and $G \sqcup H$ for the disjoint union. A graph is \emph{triangular} if every edge lies in some triangle.\footnote{We follow the terminology of~\cite{CornetDravecTorres2025}; this notion is unrelated to the alternative use of \emph{triangulated} for chordal graphs.}

For positive integers $n \ge k \ge 1$, the \emph{Johnson graph} $J(n, k)$ has vertex set $\binom{[n]}{k}$, with two vertices adjacent if and only if their intersection has size $k - 1$; thus $J(n, k)$ is $k(n - k)$-regular and has $\binom{n}{k}$ vertices.

The bijective framework underlying our argument is due to Cornet, Dravec, and Torres~\cite{CornetDravecTorres2025}. For a graph $G$, an \emph{edge-covering by triangles} of $G$ is a set $T$ of triangles of $G$ such that every edge of $G$ lies in some triangle of $T$; such a covering exists if and only if $G$ is triangular. Write $\rho_{\Delta}(G)$ for the minimum size of such a $T$, with the convention $\rho_{\Delta}(G) := \infty$ if $G$ is not triangular. For a family $D \subseteq \binom{[n]}{3}$, the \emph{$2$-shadow graph} of $D$ is the graph on vertex set $[n]$ whose edges are exactly the pairs $\{x, y\} \subseteq [n]$ contained in at least one triple of $D$. The following proposition, which we recall without proof, identifies dominating sets of $J(n, 3)$ with edge-coverings by triangles of triangular graphs $G$ on $[n]$ with $\alpha(G) \le 2$.

\begin{proposition}[\cite{CornetDravecTorres2025}]\label[proposition]{prop:bijection}
A subset $D \subseteq V(J(n, 3))$ is a dominating set of $J(n, 3)$ if and only if $D$ is an edge-covering by triangles of some triangular graph $G$ on $n$ vertices with $\alpha(G) \le 2$. Consequently, setting
\[
t_n \;:=\; \min \bigl\{ \rho_{\Delta}(G) : G \text{ is triangular},\ |V(G)| = n,\ \alpha(G) \le 2 \bigr\},
\]
we have $\gamma(J(n, 3)) = t_n$.
\end{proposition}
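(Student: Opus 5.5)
The plan is to check the two directions of the equivalence separately and then deduce the formula for $t_n$. Throughout, $G_D$ denotes the $2$-shadow graph of $D$. Two triples $A,B \in \binom{[n]}{3}$ are adjacent in $J(n,3)$ exactly when $|A\cap B| = 2$. So a triple $A \notin D$ is dominated by $D$ if and only if some pair $\{x,y\} \subseteq A$ lies in a triple of $D$, that is, if and only if $A$ contains an edge of $G_D$. A triple $A \in D$ is dominated trivially, and it spans a triangle of $G_D$ anyway.

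First, let $D$ be a dominating set. I take $G := G_D$. By construction every edge of $G_D$ lies in some triple of $D$, and every triple of $D$ spans a triangle of $G_D$. Hence $G_D$ is triangular and $D$ is an edge-covering of $G_D$ by triangles. Now suppose $\alpha(G_D) \ge 3$, and let $A$ be an independent $3$-set. Then $A \notin D$, since triples of $D$ are triangles, and $A$ contains no edge of $G_D$. By the observation above, $A$ is undominated, a contradiction. So $\alpha(G_D) \le 2$.

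Conversely, suppose $D$ is an edge-covering by triangles of some triangular graph $G$ on $[n]$ with $\alpha(G) \le 2$. Every triple of $D$ is a triangle of $G$, so $G_D \subseteq G$. Every edge of $G$ is covered by some triple of $D$, so $G \subseteq G_D$. Thus $G = G_D$. Let $A$ be any triple not in $D$. Since $\alpha(G) \le 2$, $A$ contains an edge $\{x,y\}$ of $G = G_D$. That edge lies in some $B \in D$, and $B \ne A$, so $|A \cap B| = 2$. Hence $D$ dominates $A$, and $D$ is a dominating set.

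For the consequence, the first direction gives $\gamma(J(n,3)) \ge t_n$: a minimum dominating set is an edge-covering by triangles of some admissible $G$, so its size is at least $\rho_\Delta(G) \ge t_n$. The second direction gives the reverse inequality: a minimum edge-covering of an admissible $G$ attaining $t_n$ is a dominating set of that size. One also needs $t_n$ to be finite; the complete graph $K_n$ is admissible for $n \ge 3$, so it is. The only subtle point in the whole argument is the identification $G = G_D$ in the converse, which is what forces adjacency through a shared pair; everything else is direct from the definitions.
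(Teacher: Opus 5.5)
Your proof is correct and complete: the key observation, that a triple $A \notin D$ is dominated exactly when $A$ contains an edge of the $2$-shadow $G_D$, gives both directions and the identity $\gamma(J(n,3)) = t_n$ as you describe. The paper does not prove this proposition itself (it is quoted from Cornet, Dravec, and Torres), so there is no alternative route to compare against; note only that in the converse direction just the inclusion $G \subseteq G_D$ is used, so the full identification $G = G_D$ is harmless but not needed.
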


For cliques, the relevant covering numbers were obtained by Fort and Hedlund~\cite{FortHedlund1958}: define
\[
|C_m| \;:=\; \rho_{\Delta}(K_m).
\]
The Fort\textendash{}Hedlund closed form is
\begin{equation}\label{eq:FH}
|C_m| \;=\;
\begin{cases}
m^2/6                & \text{if } m \equiv 0 \pmod{6}, \\
m(m - 1)/6           & \text{if } m \equiv 1, 3 \pmod{6}, \\
(m^2 + 2)/6          & \text{if } m \equiv 2, 4 \pmod{6}, \\
(m^2 - m + 4)/6      & \text{if } m \equiv 5 \pmod{6}.
\end{cases}
\end{equation}
Following~\cite{CornetDravecTorres2025}, define
\[
\phi_n \;:=\; \min \bigl\{ |C_{m_1}| + |C_{m_2}| : m_1 + m_2 = n,\ m_1, m_2 \ge 3 \bigr\}.
\]

\begin{proposition}[\cite{CornetDravecTorres2025}]\label[proposition]{prop:phi-upper}
For every $n \ge 6$, $\gamma(J(n, 3)) \le \phi_n$.
\end{proposition}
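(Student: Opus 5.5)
By \Cref{prop:bijection}, it is enough to show that for every $n \ge 6$ there is a triangular graph $G$ on $n$ vertices with $\alpha(G) \le 2$ and $\rho_{\Delta}(G) \le \phi_n$. The natural candidate is a disjoint union of two cliques, $G = K_{m_1} \sqcup K_{m_2}$ with $m_1 + m_2 = n$ and $m_1, m_2 \ge 3$, where $(m_1, m_2)$ is chosen to attain the minimum defining $\phi_n$. Such a choice exists because the range $3 \le m_1 \le n-3$ is nonempty exactly when $n \ge 6$, so $\phi_n$ is a minimum over a nonempty finite set.

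The verification then has three routine parts, in this order.

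\emph{Triangularity.} Every edge of $G$ lies inside one of the two cliques. Since each clique has at least $3$ vertices, the edge extends to a triangle using any third vertex of that clique.

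\emph{Independence number.} An independent set contains at most one vertex from each clique, so $\alpha(G) \le 2$. Note that two cliques are needed for this to give a proper upper bound: a single $K_n$ would also have $\alpha \le 2$, but its covering number $|C_n|$ is generally larger than $\phi_n$.

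\emph{Covering number.} Let $C^{(1)}$ be an optimal edge-covering by triangles of $K_{m_1}$, placed on the first part of $[n]$, and let $C^{(2)}$ be an optimal one of $K_{m_2}$, placed on the second part. Their union covers every edge of $G$ by triangles of $G$. Hence
\[
\rho_{\Delta}(G) \le |C_{m_1}| + |C_{m_2}| = \phi_n .
\]

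Combining these gives $t_n \le \phi_n$, and therefore $\gamma(J(n,3)) \le \phi_n$ by \Cref{prop:bijection}. To avoid relying on the bijection as a black box, one can also check domination directly for $D = C^{(1)} \cup C^{(2)}$. Take a triple $S \notin D$. Since there are only two parts, $S$ has two elements $x, y$ in the same part, and $\{x,y\}$ lies in some triangle $\{x,y,z\} \in D$. Then $|S \cap \{x,y,z\}| = 2$, so $\{x,y,z\}$ is adjacent to $S$ in $J(n,3)$, as required.

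There is no real obstacle here. The only point needing care is that the minimum defining $\phi_n$ is well defined for $n \ge 6$, and that the Fort--Hedlund values $|C_m|$ from \eqref{eq:FH} are invoked only for $m \ge 3$, where they are finite. The substantive difficulty of \Cref{thm:main} is the matching lower bound, which this proposition does not address.
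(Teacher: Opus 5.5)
Your proposal is correct and follows essentially the same route as the paper: choose a split $n = m_1 + m_2$ with $m_1, m_2 \ge 3$ attaining $\phi_n$, and place optimal edge-coverings by triangles of $K_{m_1}$ and $K_{m_2}$ on the two parts. The union then has size $\phi_n$ and $2$-shadow $K_{m_1} \sqcup K_{m_2}$, which is triangular with $\alpha = 2$, so \Cref{prop:bijection} applies. Your additional direct verification of domination (pigeonhole on the two parts, then $S \notin D$ forces $|S \cap \{x,y,z\}| = 2$) is a valid self-contained supplement that the paper does not spell out.
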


An explicit construction realising~\Cref{prop:phi-upper} is as follows. Given any split $n = m_1 + m_2$ with $m_1, m_2 \ge 3$ achieving $|C_{m_1}| + |C_{m_2}| = \phi_n$, partition $[n] = A_1 \sqcup A_2$ with $|A_i| = m_i$ and take minimum edge-coverings by triangles $\mathcal{D}_i \subseteq \binom{A_i}{3}$ of $K_{m_i}$ on $A_i$, explicitly given by the Fort\textendash{}Hedlund residue-class recipe~\cite[\S 2]{FortHedlund1958}. The union $\mathcal{D}_1 \sqcup \mathcal{D}_2$ has size $\phi_n$ and is a dominating set of $J(n, 3)$ by~\Cref{prop:bijection}, since its $2$-shadow $K_{m_1} \sqcup K_{m_2}$ is triangular with $\alpha = 2$.

The quantity $\phi_n$ admits parity-dependent closed forms. A residue computation based on the Fort\textendash{}Hedlund formula yields the following closed forms, recorded in~\cite[p.~227]{CornetDravecTorres2025}. For even $n \ge 6$,
\begin{equation}\label{eq:phi-even}
\phi_n \;=\; \frac{n^2 - 2n + a_n}{12}, \qquad
a_n \;=\;
\begin{cases}
0  & \text{if } n \equiv 2, 6 \pmod{12}, \\
4  & \text{if } n \equiv 4 \pmod{12}, \\
12 & \text{if } n \equiv 0, 8 \pmod{12}, \\
16 & \text{if } n \equiv 10 \pmod{12}.
\end{cases}
\end{equation}
For odd $n \ge 7$,
\begin{equation}\label{eq:phi-odd}
\phi_n \;=\; \frac{n^2 - n + b_n}{12}, \qquad
b_n \;=\;
\begin{cases}
0  & \text{if } n \equiv 1 \pmod{12}, \\
4  & \text{if } n \equiv 5 \pmod{12}, \\
6  & \text{if } n \equiv 3, 7 \pmod{12}, \\
10 & \text{if } n \equiv 11 \pmod{12}, \\
12 & \text{if } n \equiv 9 \pmod{12}.
\end{cases}
\end{equation}
In particular, $b_n \le 12$ for every odd $n$.

The following standalone bound for the clique covering number will be used in~\Cref{sec:proof}.

\begin{lemma}\label[lemma]{lem:clique-case}
For every odd integer $n \ge 7$, $|C_n| \ge \phi_n$.
\end{lemma}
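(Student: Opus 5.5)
We argue by direct comparison of the two closed forms, \eqref{eq:FH} for $|C_n|$ and \eqref{eq:phi-odd} for $\phi_n$. For odd $n$ we have $n \equiv 1,3,5 \pmod 6$, so by \eqref{eq:FH} either $|C_n| = n(n-1)/6$ (when $n \equiv 1,3 \pmod 6$) or $|C_n| = (n^2-n+4)/6$ (when $n \equiv 5 \pmod 6$). In both cases $|C_n| \ge (n^2-n)/6$. Meanwhile $\phi_n = (n^2-n+b_n)/12$ with $b_n \le 12$. It therefore suffices to show
\[
\frac{n^2-n}{6} \;\ge\; \frac{n^2-n+12}{12},
\]
which is equivalent to $n^2 - n \ge 12$, i.e.\ $n(n-1) \ge 12$. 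This holds for every $n \ge 4$, and in particular for every odd $n \ge 7$.

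Concretely, I would first record the lower bound $|C_n| \ge n(n-1)/6$ from \eqref{eq:FH} by inspecting the three odd residue classes modulo $6$. Next I would invoke the remark $b_n \le 12$ following \eqref{eq:phi-odd}. The final step is the elementary inequality $2(n^2-n) \ge n^2-n+12$. No case split by residue modulo $12$ is needed, because the uniform bound $b_n \le 12$ already suffices.

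The only point requiring care is that we rely on the closed form \eqref{eq:phi-odd} as recorded from \cite{CornetDravecTorres2025}, rather than on the definition of $\phi_n$ as a minimum. If one preferred a proof from the definition, the substitute would be to exhibit one split, say $m_1 = 3$ and $m_2 = n-3$, and check that $|C_3| + |C_{n-3}| = 1 + |C_{n-3}| \le |C_n|$ using \eqref{eq:FH}. Here $n-3$ is even, so $|C_{n-3}| \le ((n-3)^2+2)/6$, and we need $6 + (n-3)^2 + 2 \le n(n-1)$, i.e.\ $n^2 - 6n + 17 \le n^2 - n$, i.e.\ $5n \ge 17$. This holds for $n \ge 7$. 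This alternative route is equally routine, so I expect no real obstacle; the verification is purely arithmetic.
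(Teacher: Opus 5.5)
Your argument is correct. Your fallback route is essentially the paper's proof, but your primary route is genuinely different.

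The paper does not use the closed form \eqref{eq:phi-odd} here. It works from the definition of $\phi_n$: the split $(n-3,3)$ gives $\phi_n \le |C_{n-3}|+1$, and it then computes $|C_n|-|C_{n-3}|$ exactly in each odd class modulo $6$, obtaining $(5n-11)/6$, $(5n-9)/6$, $(5n-7)/6$.

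Your main route instead compares the uniform bounds $|C_n| \ge n(n-1)/6$ and $\phi_n \le (n^2-n+12)/12$. This is shorter and needs no case analysis beyond reading off \eqref{eq:FH}. It also makes visible that $|C_n|$ exceeds $\phi_n$ by roughly a factor of two. The cost is a dependence on the closed form \eqref{eq:phi-odd}, specifically on $b_n \le 12$, which the paper quotes from \cite{CornetDravecTorres2025} rather than derives. The paper's proof rests only on \eqref{eq:FH} and the definition of $\phi_n$.

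Your fallback removes that dependence in the same way the paper does. It replaces the exact residue-by-residue differences with the bounds $|C_n| \ge n(n-1)/6$ and $|C_{n-3}| \le ((n-3)^2+2)/6$, which reduces everything to $5n \ge 17$. That is a slightly tidier version of the same arithmetic.
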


\begin{proof}
Choose the split $(m_1, m_2) = (n - 3, 3)$ in the definition of $\phi_n$, valid since $n - 3 \ge 4$ for $n \ge 7$; this gives $\phi_n \le |C_{n - 3}| + |C_3| = |C_{n - 3}| + 1$. The Fort\textendash{}Hedlund formula~\eqref{eq:FH} yields $|C_n| \ge |C_{n - 3}| + 1$ by a residue-by-residue substitution: the difference $|C_n| - |C_{n - 3}|$ equals $(5n - 11)/6$, $(5n - 9)/6$, or $(5n - 7)/6$ according as $n \equiv 1,\,3,\,5 \pmod 6$, each of which is $\ge 1$ for $n \ge 7$. Hence $|C_n| \ge |C_{n-3}| + 1 \ge \phi_n$.
\end{proof}

The following classical bound on triangle-free non-bipartite graphs will also be used in~\Cref{sec:proof}.

\begin{lemma}\label[lemma]{lem:nonbip-trianglefree}
For every $K_3$-free non-bipartite graph $G$ on $n$ vertices,
\[
|E(G)| \;\le\; \left\lfloor \frac{(n - 1)^2}{4} \right\rfloor + 1.
\]
\end{lemma}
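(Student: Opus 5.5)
The plan is to prove the bound by induction on $n$, using the fact that a triangle-free graph has no odd cycle of length $3$: a shortest odd cycle then has length at least $5$ and is induced. The bound $\lfloor (n-1)^2/4\rfloor + 1$ is attained by a $5$-cycle blown up appropriately, so the base case is $n = 5$. Here $G$ must contain $C_5$ as a subgraph. Since $G$ is triangle-free, $G = C_5$ itself, because any chord of a $5$-cycle creates a triangle. So $|E(G)| = 5 = \lfloor 16/4\rfloor + 1$. For $n \le 4$ there is no triangle-free non-bipartite graph, so the statement holds vacuously.

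For the general step, let $C = v_1 v_2 \cdots v_{2k+1}$ be a shortest odd cycle in $G$, so $2k+1 \ge 5$. I first show that each vertex $w \notin C$ has at most two neighbours on $C$. If $w$ had two neighbours $v_i, v_j$ at distance $d \ge 3$ along $C$, then the path $v_i w v_j$ together with one of the two arcs of $C$ would form an odd cycle shorter than $C$. Adjacent neighbours are excluded by triangle-freeness. Hence all neighbours of $w$ on $C$ are pairwise at cycle-distance exactly $2$, which allows at most two of them (three pairwise at distance $2$ would need cycle length $6$, which is even; length $5$ forces distances $2,2,1$). If $2k+1 \ge 7$ the same argument gives at most two; this uses the standard fact that a vertex sees at most $2$ vertices of a shortest odd cycle of length at least $5$.

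Now count edges. The cycle $C$ is induced, since a chord would create a shorter odd cycle or a triangle, so it contributes exactly $2k+1$ edges. Between $C$ and the rest there are at most $2(n-2k-1)$ edges. Inside $G - C$, a triangle-free graph on $n-2k-1$ vertices, Mantel's theorem gives at most $\lfloor (n-2k-1)^2/4\rfloor$ edges. Summing,
\[
|E(G)| \le \left\lfloor \frac{(n-2k-1)^2}{4}\right\rfloor + 2(n-2k-1) + 2k+1 .
\]
The last step is to check that this is at most $\lfloor (n-1)^2/4\rfloor + 1$ for $2k+1 \ge 5$ and $n \ge 2k+1$. With $m = n - 2k - 1$ and $s = 2k+1$, this is the inequality $\lfloor m^2/4\rfloor + 2m + s \le \lfloor (m+s-1)^2/4\rfloor + 1$. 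The right side grows like $m(s-1)/2$ in $m$, which is at least $2m$ once $s \ge 5$, and the case $m = 0$ reduces to $s \le \lfloor (s-1)^2/4\rfloor + 1$, which holds for $s \ge 5$.

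This route avoids induction entirely: the direct count with Mantel's theorem suffices, and the only delicate points are the "at most two neighbours on a shortest odd cycle" claim and the floor bookkeeping. I expect the floor arithmetic for small $m$ and odd $m+s-1$ to be the main nuisance. I would handle it by writing $\lfloor x^2/4\rfloor$ as $(x^2 - \epsilon)/4$ with $\epsilon \in \{0,1\}$ according to parity and comparing directly.
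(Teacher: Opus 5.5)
Your proposal is correct and is essentially the paper's argument: take a shortest odd cycle $C$ of length $2k+1\ge 5$, note it is chordless by minimality, apply Mantel to $G - V(C)$, and bound the cross edges per outside vertex. The only difference is that you use the sharper bound of at most $2$ neighbours on $C$ (from minimality of $C$) where the paper uses $k$ (from $\alpha(C_{2k+1})=k$ plus triangle-freeness), and either suffices; the floor bookkeeping you anticipate is immediate, since your left-hand side is an integer bounded by $(m+s-1)^2/4+1$ (the difference is $m(s-5)/2+(s-1)(s-5)/4\ge 0$), and the opening induction framing can simply be dropped.
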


\begin{proof}
Since $G$ is non-bipartite, it contains an odd cycle; since $G$ is $K_3$-free, every odd cycle has length at least~$5$. Let $C$ be a shortest odd cycle in $G$, of length $\ell = 2k + 1$ with $k \ge 2$, and label its vertices $v_0, v_1, \ldots, v_{2k}$ in cyclic order.

\textit{Claim.} Each vertex $w \in V(G) \setminus V(C)$ has at most $k$ neighbours in $V(C)$.

Indeed, the maximum size of an independent set in $C_{2k+1}$ is $k$, so any $k + 1$ vertices of $V(C)$ contain two consecutive vertices $v_i, v_{i+1}$ (indices mod $2k + 1$). If $w$ has $k + 1$ or more neighbours in $V(C)$, then $v_i, v_{i+1} \in N_G(w)$ for some $i$, and $\{w, v_i, v_{i+1}\}$ is a triangle in $G$, contradicting the $K_3$-free hypothesis.

Now decompose $E(G)$ into the edges within $V(C)$, the edges of $G - V(C)$, and the cross edges between $V(C)$ and $V(G) \setminus V(C)$. By minimality of $C$, $V(C)$ has no chord in $G$: a chord at cyclic distance $d \in \{2, \ldots, k\}$ would split $C$ into two cycles of lengths $d + 1$ and $2k + 2 - d$, exactly one of which is odd and strictly shorter than $2k + 1$, contradicting minimality. Hence the edges within $V(C)$ number exactly $2k + 1$, the cycle edges of $C$. The induced subgraph $G - V(C)$ has $n - (2k + 1)$ vertices and is $K_3$-free, so Mantel's theorem~\cite{Mantel1907} gives
\[
|E(G - V(C))| \;\le\; \left\lfloor \frac{(n - 2k - 1)^2}{4} \right\rfloor.
\]
The claim bounds the cross edges by $k(n - 2k - 1)$. Summing,
\[
|E(G)| \;\le\; \left\lfloor \frac{(n - 2k - 1)^2}{4} \right\rfloor + (2k + 1) + k(n - 2k - 1).
\]
Writing $a := n - 2k - 1$ and dropping the floor on the right-hand side, completing the square in $a$ gives
\[
\frac{a^2}{4} + (2k + 1) + ka \;=\; \frac{(a + 2k)^2 - 4k^2 + 4(2k + 1)}{4}.
\]
Since $a + 2k = n - 1$, this equals
\[
\frac{(n - 1)^2}{4} + \frac{4(2k + 1) - 4k^2}{4} \;=\; \frac{(n - 1)^2}{4} + 2 - (k - 1)^2.
\]
For $k \ge 2$ we have $(k - 1)^2 \ge 1$, hence $2 - (k - 1)^2 \le 1$, giving
\[
|E(G)| \;\le\; \frac{(n - 1)^2}{4} + 1.
\]
Since $|E(G)|$ is an integer, $|E(G)| \le \lfloor (n - 1)^2/4 \rfloor + 1$.
\end{proof}

\begin{remark}
This is a result of Erd\H{o}s; see Exercise~7.3.3 in~\cite{BondyMurty1976}.
\end{remark}

\section{Proof of the main theorem}\label{sec:proof}

By~\Cref{prop:bijection} and~\Cref{prop:phi-upper}, it suffices to prove $\rho_{\Delta}(G) \ge \phi_n$ for every triangular graph $G$ of order $n$ with $\alpha(G) \le 2$. If $\alpha(G) = 1$, then $G = K_n$ and~\Cref{lem:clique-case} applies; we may assume $\alpha(G) = 2$.

We use two general inequalities throughout. Since each triangle of $G$ covers at most three edges, an elementary double count gives
\begin{equation}\label{eq:3D-EG}
3 \, \rho_{\Delta}(G) \;\ge\; |E(G)|.
\end{equation}
For any non-adjacent pair $u, v \in V(G)$, write $H := G - \{u, v\}$ and $X_{uv} := N_G(u) \cap N_G(v)$. Since $uv \notin E(G)$, every edge of $G$ either lies inside $H$ or has $u$ or $v$ as an endpoint with the other endpoint in $V(H)$. Therefore
\[
|E(G)| \;=\; |E(H)| + |N_G(u) \cap V(H)| + |N_G(v) \cap V(H)|.
\]
Since $\alpha(G) \le 2$, every vertex of $V(H)$ lies in $N_G(u) \cup N_G(v)$: otherwise such a vertex would, together with $u$ and $v$, form an independent triple. Hence $|N_G(u) \cap V(H)| + |N_G(v) \cap V(H)| \ge (n - 2) + |X_{uv}|$, and combining,
\begin{equation}\label{eq:E-decomp}
|E(G)| \;\ge\; |E(H)| + (n - 2) + |X_{uv}|.
\end{equation}

The argument splits along the dichotomy
\begin{itemize}
\item \emph{Case~A}: there exists a non-adjacent pair $u, v \in V(G)$ with $|X_{uv}| \le 1$, or with $G - \{u, v\}$ disconnected;
\item \emph{Case~B}: every non-adjacent pair $u, v \in V(G)$ satisfies $|X_{uv}| \ge 2$ and $G - \{u, v\}$ is connected,
\end{itemize}
which is exhaustive, since Case~B is the negation of Case~A. The two cases play asymmetric roles in the proof.

Case~A reduces $G$ to a $2$-clique partition (of $V(G)$ or of $V(G) - \{w\}$), then applies structural lemmas of~\cite{CornetDravecTorres2025}. Case~B is the main novelty: when the complement of $G - \{u, v\}$ is bipartite, a coupling argument propagates an edge-count bound from cross-isolated vertices to $\{u, v\}$.

\subsection{Case A: Reduction to 2-clique partitions}\label{sec:caseA}

The following two lemmas of~\cite{CornetDravecTorres2025} give $\rho_{\Delta}(G) \ge \phi_n$ when $V(G)$, or $V(G) - \{w\}$ for some $w$, admits a partition into two cliques.

\begin{lemma}[\cite{CornetDravecTorres2025}]\label[lemma]{lem:CDT9}
Let $G$ be a triangular graph of order $n$ with $\alpha(G) \le 2$. If $V(G)$ admits a partition into two cliques, each of size at least $2$, then $\rho_{\Delta}(G) \ge \phi_n$.
\end{lemma}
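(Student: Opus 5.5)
The plan is to write $V(G) = A_1 \sqcup A_2$ with $A_i$ cliques of sizes $m_1, m_2 \ge 2$, $m_1 + m_2 = n$. We then split a minimum edge-covering $T$ by triangles into three classes according to where the vertices lie: triangles inside $A_1$, triangles inside $A_2$, and mixed triangles. A mixed triangle has two vertices in one part and one in the other. Every mixed triangle covers exactly one edge inside some $A_i$ and two cross edges. The aim is to compare $|T|$ with $|C_{m_1}| + |C_{m_2}|$, or with $|C_{m_1'}| + |C_{m_2'}|$ for a modified split, and conclude via the definition of $\phi_n$.

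First, I would dispose of the degenerate cases. If $G$ has no cross edges, then $G = K_{m_1} \sqcup K_{m_2}$ and $\rho_\Delta(G) = |C_{m_1}| + |C_{m_2}|$. This is at least $\phi_n$ provided $m_1, m_2 \ge 3$. If some $m_i = 2$ with no cross edges, then $G$ is not triangular, a contradiction.

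In general, let $t_i$ be the number of triangles of $T$ inside $A_i$, and let $s_i$ be the number of mixed triangles with two vertices in $A_i$. Adding one new edge to the triangles of the second kind turns them into triangles of a clique on $A_i$: each mixed triangle $\{a, a', b\}$ with $a, a' \in A_i$ is replaced by a triangle $\{a, a', x\}$ with $x \in A_i$. This replacement may fail if $m_i$ is small, and then I would instead route the edge $aa'$ through an auxiliary vertex. The result is
\[
t_i + s_i \ge \rho_\Delta(K_{m_i}) = |C_{m_i}|, \qquad \text{so} \qquad |T| = t_1 + t_2 + s_1 + s_2 \ge |C_{m_1}| + |C_{m_2}|.
\]
Strictly, the replaced triangles need only cover the edges $aa'$, which is exactly what is needed. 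Every edge of $A_i$ is covered either by a triangle inside $A_i$ or by a mixed triangle with two vertices in $A_i$. So the family $\{\text{triangles in } A_i\} \cup \{aa'x\}$ covers $K_{m_i}$ whenever $m_i \ge 3$, and this gives the bound immediately.

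The main obstacle is the case $m_i = 2$ (and possibly sizes where $|C_2|$ is undefined). Suppose $A_1 = \{a, a'\}$. Then $aa'$ is covered by a mixed triangle $aa'b$ with $b \in A_2$. I would move $b$ into $A_1$, using that $A_1 \cup \{b\}$ is a clique, to obtain the split $(3, n-3)$. One then checks that $|T| \ge 1 + |C_{n-3}|$: the triangles inside $A_2$ together with those mixed triangles with two vertices in $A_2$ cover $K_{m_2}$. Hence
\[
|T| \ge 1 + |C_{n-2}| \ge 1 + |C_{n-3}| \ge \phi_n,
\]
where the middle inequality uses monotonicity of $|C_m|$, which follows from \eqref{eq:FH}. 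Since $A_2$ must be covered with $|C_{n-2}|$ triangles that are all disjoint from $aa'b$ except via the mixed ones, the triangle $aa'b$ is counted separately. If both parts have size $2$ then $n = 4$, which is excluded. With these cases combined, $\rho_\Delta(G) \ge \phi_n$ follows.
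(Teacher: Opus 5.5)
Your proof is correct. The paper gives no proof of this lemma; it quotes it from Cornet, Dravec, and Torres. So there is no in-paper argument to compare against, and your write-up supplies a self-contained proof of a step the paper only cites.

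The key observation is the right one. Every triangle of $T$ has at least two vertices in exactly one of $A_1, A_2$, so the classes counted by $t_1+s_1$ and $t_2+s_2$ are disjoint. For $m_i \ge 3$, take the triangles with at least two vertices in $A_i$ and replace each outside vertex by a third vertex of $A_i$; the result covers $K_{A_i}$. This gives $|T| \ge |C_{m_1}| + |C_{m_2}| \ge \phi_n$.

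When $m_1 = 2$, triangularity forces a triangle $aa'b$ that lies outside the $A_2$-class. Hence $|T| \ge 1 + |C_{n-2}| \ge 1 + |C_{n-3}| = |C_3| + |C_{n-3}| \ge \phi_n$. This uses the same $(3, n-3)$ split as the proof of \Cref{lem:clique-case}, and it needs $n \ge 6$, which is exactly where $\phi_n$ is defined.

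Several parts of the write-up should be trimmed or corrected:
\begin{itemize}
\item For $m_i \ge 3$ a third vertex of $A_i$ always exists, so the replacement never fails and the auxiliary-vertex contingency can be dropped.
\item Moving $b$ into $A_1$ to form a $(3, n-3)$ split plays no role in the argument.
\item The sentence claiming $|T| \ge 1 + |C_{n-3}|$ \emph{because} the $A_2$-class covers $K_{m_2}$ actually yields $1 + |C_{n-2}|$. Monotonicity of $|C_m|$ then supplies the last step, as in your displayed chain.
\item The no-cross-edges paragraph is a special case of the general count.
\item The hypothesis $\alpha(G) \le 2$ follows automatically from the two-clique partition and is never used.
\end{itemize}
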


\begin{lemma}[\cite{CornetDravecTorres2025}, Lemma~10]\label[lemma]{lem:CDT10}
Let $G$ be a triangular graph of order $n$ with $\alpha(G) \le 2$. If there exists $w \in V(G)$ such that $V(G) - \{w\}$ admits a partition into two cliques $A \sqcup B$ (one part may be empty), then $\rho_{\Delta}(G) \ge \phi_n$.
\end{lemma}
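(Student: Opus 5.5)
The plan is to reduce to \Cref{lem:CDT9} whenever $w$ can be absorbed into one of the two cliques. In the remaining configuration I would use a counting argument sharpened by local parity. Write $a := |A|$ and $b := |B|$, so that $a + b = n - 1$. Let $A_0 := A \setminus N_G(w)$ and $B_0 := B \setminus N_G(w)$, with $s := |A_0|$ and $t := |B_0|$.

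The first step records what $\alpha(G) \le 2$ forces. Any independent triple must contain $w$, since $A$ and $B$ are cliques. It must then consist of $w$, some $x \in A_0$ and some $y \in B_0$ with $xy \notin E(G)$. Hence every pair in $A_0 \times B_0$ is an edge of $G$.

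The second step is the easy case $t = 0$ (or symmetrically $s = 0$). Then $B \cup \{w\}$ is a clique, so $V(G) = A \sqcup (B \cup \{w\})$ is a partition into two cliques. If both parts have size at least $2$, \Cref{lem:CDT9} gives $\rho_{\Delta}(G) \ge \phi_n$ at once. The degenerate situations are $|A| \le 1$ or $|B \cup \{w\}| \le 1$. There $G$ is $K_n$ or $K_{n-1}$ plus one vertex, and the triangular condition forces that vertex's edges to be covered by triangles through the big clique. In these situations I would compare directly with \Cref{lem:clique-case}: a cover of $G$ restricted to the big clique already needs about $|C_{n-1}|$ triangles. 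One extra triangle is needed if the extra vertex has any neighbour, and that vertex has a neighbour because $\alpha \le 2$ and $n \ge 7$ prevent it from being isolated. The difference computation is the same as in \Cref{lem:clique-case}.

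The main case is $s, t \ge 1$, which I expect to be the real obstacle. Here $G$ contains the two cliques on $A$ and $B$, the complete bipartite graph between $A_0$ and $B_0$, and the $n - 1 - s - t$ edges at $w$. This gives
\[
|E(G)| \;\ge\; \binom{a}{2} + \binom{b}{2} + st + (n - 1 - s - t).
\]
I would first try \eqref{eq:3D-EG} with this count and minimise over $a$, $s$ and $t$. The term $st - s - t$ is minimised at $s = t = 1$, where the bound falls roughly $n/6$ short of $\phi_n$ when $a \approx b$. When $s$ and $t$ are both large, or $a$ and $b$ are very unbalanced, the count alone should suffice.

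To close the gap in the balanced, small-$s,t$ regime I would import the Fort\textendash{}Hedlund mechanism. In a triangle cover, each vertex $x$ lies in at least $\lceil \deg(x)/2 \rceil$ triangles, and strictly more if some edge at $x$ is covered twice. Summing over $A \cup B$ with degrees close to $a-1$ and $b-1$ recovers the residue corrections $b_n$ in \eqref{eq:phi-odd}. The edges at $w$ and the cross edges then contribute at least about $(n-1-s-t)/2$ further triangles, because each triangle through $w$ covers only two of $w$'s edges. These triangles cannot be reused efficiently inside $A$ or $B$ once $s, t \ge 1$.

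Alternatively, one can exchange triangles in a minimum cover $T$. Each triangle through $w$ or using a cross edge would be replaced by one inside $A \cup \{w\}$ or inside $B$. The aim is to produce a cover of $K_{a+1} \sqcup K_b$ or of $K_a \sqcup K_{b+1}$ of size at most $|T|$, which yields $|T| \ge \phi_n$ by the definition of $\phi_n$. I would try this exchange first and fall back on the parity count in any residue class where the exchange fails.
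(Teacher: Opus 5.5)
Your case split matches the paper's. The reduction when $s=0$ or $t=0$ is its Case~(1), and the inequality $|E(G)|\ge\binom{a}{2}+\binom{b}{2}+st+(n-1-s-t)$ you write for $s,t\ge 1$ is exactly the count used in its Case~(2).

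\textbf{The gap: you misevaluate this count and then abandon it.} Since $st-s-t=(s-1)(t-1)-1\ge -1$, and by convexity $\binom{a}{2}+\binom{b}{2}$ with $a+b=n-1$ is minimised at $a=b=(n-1)/2$ (an integer as $n$ is odd), you get $|E(G)|\ge \frac{(n-1)(n-3)}{4}+n-2=\frac{n^2-5}{4}$. Then \eqref{eq:3D-EG} gives $\rho_{\Delta}(G)\ge\lceil (n^2-5)/12\rceil$. Compared with $\phi_n=(n^2-n+b_n)/12$, the quantity $(n^2-5)/12$ is \emph{larger} by $(n-5-b_n)/12$, roughly $n/12$; it does not fall $n/6$ short. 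For the small $n$ where $n-5-b_n<0$, integrality closes the gap. Because $\phi_n$ is an integer, $\lceil (n^2-5)/12\rceil\ge\phi_n$ is equivalent to $n>b_n-7$, which holds since $b_n\le 12$ and $n\ge 7$. So the main case needs only the elementary count.

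The arguments you propose instead are not proofs as written. You assert that the per-vertex parity bound $\lceil \deg(x)/2\rceil$, summed over $A\cup B$, recovers $b_n$, but you never carry out that sum. The exchange into a cover of $K_{a+1}\sqcup K_b$ would have to create triangles covering the pairs $w$--$A_0$. These are non-edges of $G$, so no triangle of $T$ covers them, and you do not show that covering them costs nothing. As submitted, the case $s,t\ge 1$ is therefore unproved, although the fix is just to finish your own computation.

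\textbf{A second, smaller error is in your degenerate case.} $\alpha(G)\le 2$ does not stop the extra vertex from being isolated, since $K_1\sqcup K_{n-1}$ has independence number $2$. The conclusion survives without the ``extra triangle''. Any triangle cover of $G$ converts into a cover of the big clique $K_{n-1}$ of no larger size. A triangle through the extra vertex covers exactly one clique edge, so you can swap it for any triangle of the clique containing that edge. Hence $\rho_{\Delta}(G)\ge|C_{n-1}|\ge (n-1)^2/6\ge (n^2-n+12)/12\ge\phi_n$ for $n\ge 5$. This is what \Cref{lem:singleton-degenerate} does when $d=0$. For $d\ge 1$ the paper instead uses triangularity to force $d\ge 2$ and then an edge count.
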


\begin{remark}\label{rmk:lemma10-form}
The statement of Lemma~10 in~\cite{CornetDravecTorres2025} additionally assumes $\rho_{\Delta}(G) = t_n$ and concludes $t_n = \phi_n$; the two forms are equivalent modulo~\Cref{prop:phi-upper}, but the form here, which bounds $\rho_{\Delta}(G)$ for each $G$ in the class, is the one we apply directly in the case analysis. The proof is omitted in~\cite{CornetDravecTorres2025} ``due to length restrictions''; we supply a self-contained proof below.
\end{remark}

The proof of~\Cref{lem:CDT10} uses the following lemma, which handles the case when one part of the 2-clique partition is a singleton.

\begin{lemma}\label[lemma]{lem:singleton-degenerate}
Let $G$ be a triangular graph of order $n$ (odd, $n \ge 7$) with $\alpha(G) \le 2$. Suppose $V(G) = X \sqcup \{v\}$, where $X$ is a clique of size $n - 1$. Then $\rho_{\Delta}(G) \ge \phi_n$.
\end{lemma}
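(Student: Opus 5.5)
The plan is to prove the stronger bound $\rho_{\Delta}(G) \ge |C_{n-1}|$ and then check $|C_{n-1}| \ge \phi_n$ from the Fort\textendash{}Hedlund formula, in the same way as in \Cref{lem:clique-case}.

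\textbf{Step 1: projecting a covering onto $X$.} Let $N := N_G(v) \subseteq X$ and let $T$ be a minimum edge-covering by triangles of $G$. Each triangle in $T$ either lies inside $X$ or has the form $\{v, x, y\}$ with $x, y \in N$. In the second case its only edge inside $X$ is $xy$. Since $|X| = n - 1 \ge 6$, we can replace each such triangle by some triangle $\{x, y, z\}$ with $z \in X \setminus \{x, y\}$; this triangle still covers $xy$. Triangles inside $X$ are kept unchanged. The resulting family has at most $|T|$ triangles, all lying in the clique $X$, and it covers every edge of $X$, because every such edge was covered by some triangle of $T$. Hence
\[
\rho_{\Delta}(G) \;\ge\; \rho_{\Delta}(K_{n-1}) \;=\; |C_{n-1}|.
\]
This handles every possible $N$ at once, including $N = \emptyset$, where $v$ is isolated and $\rho_{\Delta}(G) = |C_{n-1}|$ exactly. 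It uses neither the hypothesis $\alpha(G) \le 2$ nor triangularity beyond the requirement that $\rho_{\Delta}(G)$ be finite.

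\textbf{Step 2: comparing $|C_{n-1}|$ with $\phi_n$.} Take the split $(m_1, m_2) = (n - 4, 3)$ in the definition of $\phi_n$. It is admissible because $n - 4 \ge 3$ for $n \ge 7$, and it gives $\phi_n \le |C_{n-4}| + 1$. It therefore suffices to show $|C_m| - |C_{m-3}| \ge 1$ for every even $m = n - 1 \ge 6$.

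Since $m$ is even, there are three residues to check, $m \equiv 0, 2, 4 \pmod 6$, with $m - 3 \equiv 3, 5, 1 \pmod 6$ respectively. For example, when $m \equiv 0 \pmod 6$,
\[
|C_m| - |C_{m-3}| \;=\; \frac{m^2 - (m-3)(m-4)}{6} \;=\; \frac{7m - 12}{6} \;\ge\; 5 .
\]
The other two residues are equally routine: each difference is a linear expression in $m$ of size roughly $m$, hence well above $1$ once $m \ge 6$. The smallest case $n = 7$ can be checked directly: $|C_6| = 6 \ge 4 = |C_3| + |C_3| = \phi_7$.

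\textbf{Expected difficulty.} I do not expect a real obstacle. The only point needing care is the substitution in Step 1: the replacement triangle $\{x, y, z\}$ must exist, which needs $|X| \ge 3$, and it must lie inside $X$, which holds because $X$ is a clique. Once Step 1 is in place, everything reduces to the residue bookkeeping in Step 2.
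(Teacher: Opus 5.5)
\textbf{Step 2 breaks down.} The split $(n-4,3)$ has $m_1+m_2=n-1$, so it is not admissible in the definition of $\phi_n$; it is a split for $\phi_{n-1}$. The inequality you draw from it, $\phi_n \le |C_{n-4}|+1$, is in fact false. For $n=7$ it would give $\phi_7 \le |C_3|+1 = 2$, whereas $\phi_7 = (49-7+6)/12 = 4$. The chain $|C_{n-1}| \ge |C_{n-4}|+1 \ge \phi_n$ therefore fails at the second inequality, and comparing $|C_m|$ with $|C_{m-3}|$ proves nothing about $\phi_n$. Your $n=7$ sanity check has the same slip: $|C_3|+|C_3| = 2$, and $(3,3)$ is a split of $6$. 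The split realising $\phi_7=4$ is $(3,4)$, with $|C_3|+|C_4| = 1+3$.

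\textbf{The repair is short.} Use the admissible split $(n-3,3)$, which gives $\phi_n \le |C_{n-3}|+1$, and compare $|C_m|$ with $|C_{m-2}|$ for even $m=n-1\ge 6$. The residues $m\equiv 0,2,4 \pmod 6$ give $m-2\equiv 4,0,2$, and the Fort--Hedlund formula yields differences $(4m-6)/6$, $(4m-2)/6$, $(4m-4)/6$, each at least $3$ for $m\ge 6$. Alternatively, follow the paper's route: $|C_{n-1}|\ge (n-1)^2/6$ for even $n-1$, and $\phi_n\le (n^2-n+12)/12$ since $b_n\le 12$. The comparison then reduces to $(n-5)(n+2)\ge 0$.

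\textbf{Step 1 is correct, and it differs from the paper.} The paper splits on $d=|N_G(v)\cap X|$. For $d=0$ it uses $\rho_\Delta(G)=|C_{n-1}|$. For $d\ge 1$ it uses triangularity to force $d\ge 2$, then the crude count $\rho_\Delta(G)\ge |E(G)|/3\ge (n^2-3n+6)/6$. Your projection argument replaces each $\{v,x,y\}\in T$ by $\{x,y,z\}\subseteq X$. It gives the stronger bound $\rho_\Delta(G)\ge |C_{n-1}|$ uniformly in $d$, with no edge counting and without using $\alpha(G)\le 2$. So once Step 2 is fixed, your proof is a cleaner alternative to the paper's case split.
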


\begin{proof}
Since $X$ is a clique of size $n - 1$, the induced subgraph $G[X]$ equals $K_{n-1}$. Let $d := |N_G(v) \cap X|$.

\textit{Case~(1): $d = 0$.} Then $G = K_1 \sqcup K_{n-1}$ and $\rho_{\Delta}(G) = |C_{n-1}|$. Since $n - 1 \ge 6$ is even, its residue modulo~$6$ lies in $\{0, 2, 4\}$, and the Fort\textendash{}Hedlund formula~\eqref{eq:FH} gives $|C_{n-1}| \ge (n-1)^2/6$ in each case. Combined with $\phi_n \le (n^2 - n + 12)/12$ (since $b_n \le 12$ by~\eqref{eq:phi-odd}), it suffices to verify $2(n - 1)^2 \ge n^2 - n + 12$, which reduces to $(n - 5)(n + 2) \ge 0$, valid for $n \ge 5$. Hence $|C_{n-1}| \ge \phi_n$.

\textit{Case~(2): $d \ge 1$.} Pick $u \in N_G(v) \cap X$. The triangular hypothesis forces the edge $\{v, u\}$ to lie in some triangle $\{v, u, x\}$ of $G$. Since $V(G) - \{v\} = X$, we have $x \in X$. The clique structure on $X$ gives $x \sim u$ automatically, while the triangle requires $x \sim v$, i.e., $x \in N_G(v) \cap X$. Hence $|N_G(v) \cap X| \ge 2$, so $d \ge 2$, and $|E(G)| = \binom{n-1}{2} + d \ge \binom{n-1}{2} + 2 = (n^2 - 3n + 6)/2$. By~\eqref{eq:3D-EG},
\[
\rho_{\Delta}(G) \;\ge\; \frac{|E(G)|}{3} \;\ge\; \frac{n^2 - 3n + 6}{6}.
\]
The inequality $(n^2 - 3n + 6)/6 \ge \phi_n$ is equivalent to $n^2 - 5n + 12 \ge b_n$; at $n = 7$ the left side equals $26$, and since $b_n \le 12$ by~\eqref{eq:phi-odd} and $n^2 - 5n + 12$ is monotone increasing for $n \ge 3$, the inequality holds for every odd $n \ge 7$. Hence $\rho_{\Delta}(G) \ge \phi_n$.
\end{proof}

\begin{proof}[Proof of~\Cref{lem:CDT10}]
Set $A' := N_G(w) \cap A$, $\overline A := A \setminus A'$, $a_0 := |\overline A|$, and analogously $B', \overline B$, $b_0 := |\overline B|$. Note that $|A| + |B| = n - 1$.

\textit{Case~(1): $a_0 = 0$ or $b_0 = 0$.} By symmetry, assume $a_0 = 0$, i.e., $w$ is adjacent to every vertex of $A$. We further split on whether $A$ or $B$ is empty.

\textit{Sub-case~(1.1): $A = \emptyset$ or $B = \emptyset$.} Then $V(G) - \{w\}$ is a single clique $C$ of size $n - 1$. Let $\overline C := C \setminus N_G(w)$. If $\overline C = \emptyset$, then $V(G) = K_n$ and~\Cref{lem:clique-case} gives $\rho_{\Delta}(K_n) = |C_n| \ge \phi_n$. Otherwise $V(G) = (\{w\} \sqcup (C \cap N_G(w))) \sqcup \overline C$ is a 2-clique partition: $\{w\} \sqcup (C \cap N_G(w))$ is a clique since $C$ is, and $\overline C \subseteq C$ is a subclique. If both parts have size $\ge 2$,~\Cref{lem:CDT9} applies; if one part is a singleton,~\Cref{lem:singleton-degenerate} applies. Either way, $\rho_{\Delta}(G) \ge \phi_n$.

\textit{Sub-case~(1.2): $A \ne \emptyset$ and $B \ne \emptyset$.} Since $a_0 = 0$, $A \sqcup \{w\}$ is a clique, and $V(G) = (A \sqcup \{w\}) \sqcup B$ is a 2-clique partition. If $|B| \ge 2$,~\Cref{lem:CDT9} applies. If $|B| = 1$, then $X := A \sqcup \{w\}$ is a clique of size $n - 1$, so~\Cref{lem:singleton-degenerate} applies with $v$ being the unique element of $B$. In either case, $\rho_{\Delta}(G) \ge \phi_n$.

\textit{Case~(2): $a_0 \ge 1$ and $b_0 \ge 1$.} We use a direct edge count. First, we claim that
\begin{equation}\label{eq:bar-cross}
\bigl\{ \{a, b\} : a \in \overline A,\ b \in \overline B \bigr\} \;\subseteq\; E(G).
\end{equation}
Indeed, fix $a \in \overline A$ and $b \in \overline B$. By definition of $\overline A$ and $\overline B$, neither $\{w, a\}$ nor $\{w, b\}$ is an edge of $G$. Since $\alpha(G) \le 2$, the triple $\{w, a, b\}$ cannot be independent, hence $\{a, b\} \in E(G)$.

Counting edges of $G$ in disjoint subsets according to the partition $V(G) = A \sqcup B \sqcup \{w\}$: the cliques $K_A, K_B$ give $\binom{|A|}{2} + \binom{|B|}{2}$ edges, the cross-pairs $\overline A \times \overline B$ give $a_0 b_0$ edges by~\eqref{eq:bar-cross}, and $w$ contributes $|A| - a_0$ edges to $A'$ and $|B| - b_0$ edges to $B'$.
Hence
\[
|E(G)| \;\ge\; \binom{|A|}{2} + \binom{|B|}{2} + a_0 b_0 + (|A| - a_0) + (|B| - b_0).
\]
Substituting $|A| + |B| = n - 1$ and using $a_0 b_0 - a_0 - b_0 = (a_0 - 1)(b_0 - 1) - 1 \ge -1$,
\[
|E(G)| \;\ge\; \binom{|A|}{2} + \binom{|B|}{2} + n - 2.
\]
By convexity, $\binom{|A|}{2} + \binom{|B|}{2}$ on $|A| + |B| = n - 1$ is minimised at $|A| = |B| = (n - 1)/2$ (an integer since $n$ is odd), with value $(n - 1)(n - 3)/4$. Hence
\[
|E(G)| \;\ge\; \frac{(n - 1)(n - 3)}{4} + n - 2 \;=\; \frac{n^2 - 5}{4}.
\]
By~\eqref{eq:3D-EG}, $\rho_\Delta(G) \ge |E(G)|/3 \ge (n^2 - 5)/12$, and since $\rho_\Delta(G)$ is an integer,
\[
\rho_\Delta(G) \;\ge\; \left\lceil \frac{n^2 - 5}{12} \right\rceil.
\]
Since $\phi_n = (n^2 - n + b_n)/12$ is an integer by~\eqref{eq:phi-odd}, the inequality $\lceil (n^2 - 5)/12 \rceil \ge \phi_n$ is equivalent to $(n^2 - 5)/12 > \phi_n - 1$, i.e., $n > b_n - 7$. This holds since $b_n \le 12$ and $n \ge 7$, completing Case~(2).
\end{proof}

The next lemma reduces the case when $G - \{u, v\}$ is disconnected to a partition of the type covered by~\Cref{lem:CDT9} or~\Cref{lem:CDT10}.

\begin{lemma}\label[lemma]{lem:disconnection-2clique}
Let $G$ be a triangular graph with $\alpha(G) \le 2$, and let $u, v \in V(G)$ be non-adjacent. If $G - \{u, v\}$ is disconnected, then either
\begin{itemize}
\item[\textnormal{(i)}] $V(G)$ admits a partition into two cliques, or
\item[\textnormal{(ii)}] there exists $w \in \{u, v\}$ such that $V(G) - \{w\}$ admits a partition into two cliques.
\end{itemize}
\end{lemma}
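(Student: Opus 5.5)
The plan is to show first that $H := G - \{u, v\}$ consists of exactly two cliques with no edges between them, and then to attach $u$ and $v$ to these cliques using only the hypothesis $\alpha(G) \le 2$. The triangular hypothesis should not be needed.

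\emph{Step 1: structure of $H$.} Since $H$ is disconnected, it has at least two components. If it had three, one vertex from each would form an independent triple in $G$, contradicting $\alpha(G) \le 2$. So $H$ has exactly two components $P$ and $Q$, both nonempty. Suppose $P$ contained two non-adjacent vertices $p, p'$. Then $p, p'$ together with any $q \in Q$ would form an independent triple, since there are no $P$--$Q$ edges in $H$. Hence $P$ is a clique, and by symmetry so is $Q$. Thus $V(H) = P \sqcup Q$ with $P, Q$ cliques and no edges between them.

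\emph{Step 2: attaching $u$ and $v$.} I claim that $u$ is adjacent to every vertex of $P$ or to every vertex of $Q$. Otherwise there are $p \in P$ and $q \in Q$ with $p \not\sim u$ and $q \not\sim u$. Since $p \not\sim q$, the triple $\{u, p, q\}$ is independent, which is a contradiction. The same argument applies to $v$.

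\emph{Step 3: case split.} First suppose $u$ and $v$ are complete to different sides, say $u$ to $P$ and $v$ to $Q$. Then $V(G) = (P \cup \{u\}) \sqcup (Q \cup \{v\})$ is a partition into two cliques, which is alternative~(i). Otherwise $u$ and $v$ are complete to the same side, say $P$. Then $P \cup \{u\}$ is a clique, and $V(G) - \{v\} = (P \cup \{u\}) \sqcup Q$ is a partition into two cliques, which is alternative~(ii) with $w = v$.

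If $u$ is complete to both sides, either choice in Step 3 works, so the cases are exhaustive. The only step with real content is Step 1, the observation that $\alpha(G) \le 2$ forces $H$ to be exactly two disjoint cliques; once that is in place, the rest is a short case check.
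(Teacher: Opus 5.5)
Your proposal is correct and follows the paper's proof essentially step for step: $G - \{u, v\}$ has exactly two components, each a clique; each of $u, v$ is complete to one component; and the split is on whether $u$ and $v$ share a fully-adjacent component (giving (ii)) or not (giving (i)). As you note, the triangular hypothesis is never used, and the paper's proof does not use it either.
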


\begin{proof}
Since $G - \{u, v\}$ is disconnected, it has at least two components. If it had three or more, picking one vertex from each of three components would yield an independent triple in $G$, contradicting $\alpha(G) \le 2$. Hence $G - \{u, v\}$ has exactly two components, which we denote $C_1$ and $C_2$.

Each $C_i$ is a clique of $G$, since a non-edge in $C_i$ together with any vertex of the other component would form an independent triple of $G$. Moreover, if $u$ had non-neighbours $x \in C_1$ and $y \in C_2$, then $\{u, x, y\}$ would be independent in $G$; hence $C_i \subseteq N_G(u)$ for some $i \in \{1, 2\}$, and the same holds for $v$.

\textit{Case~(1): $u, v$ share a fully-adjacent component}, i.e., $C_i \subseteq N_G(u) \cap N_G(v)$ for some $i$. Without loss of generality, $i = 1$. Then $\{u\} \sqcup C_1$ is a clique, and $V(G) - \{v\} = (\{u\} \sqcup C_1) \sqcup C_2$ is a 2-clique partition, yielding~(ii) with $w = v$.

\textit{Case~(2): no common fully-adjacent component.} The sets $\{i : C_i \subseteq N_G(u)\}$ and $\{j : C_j \subseteq N_G(v)\}$ are disjoint non-empty subsets of $\{1, 2\}$, so without loss of generality $C_1 \subseteq N_G(u)$ and $C_2 \subseteq N_G(v)$. Then $V(G) = (\{u\} \sqcup C_1) \sqcup (\{v\} \sqcup C_2)$ is a 2-clique partition, yielding~(i).
\end{proof}

The following lemma resolves Case~A.

\begin{lemma}\label[lemma]{lem:case-a-general}
Let $n \ge 6$ and let $G$ be a triangular graph of order $n$ with $\alpha(G) \le 2$. Suppose there exists a non-adjacent pair $u, v \in V(G)$ with $|X_{uv}| \le 1$ or with $G - \{u, v\}$ disconnected. Then $\rho_{\Delta}(G) \ge \phi_n$.
\end{lemma}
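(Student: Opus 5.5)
The plan is to reduce both alternatives in the hypothesis to a two-clique partition of $V(G)$, or of $V(G)-\{w\}$ for a single vertex $w$. From there, \Cref{lem:CDT9}, \Cref{lem:CDT10} and \Cref{lem:singleton-degenerate} finish the job.

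\emph{The disconnected case.} If $G-\{u,v\}$ is disconnected, \Cref{lem:disconnection-2clique} applies directly.
\begin{itemize}
\item In its alternative~(ii) there is $w\in\{u,v\}$ with $V(G)-\{w\}$ split into two cliques, and \Cref{lem:CDT10} gives $\rho_\Delta(G)\ge\phi_n$.
\item In its alternative~(i) we have a partition $V(G)=P\sqcup Q$ into cliques. If both parts have size at least $2$, \Cref{lem:CDT9} applies.
\item If one part is a singleton, we are in the situation of \Cref{lem:singleton-degenerate}, with the other part a clique of size $n-1$. (One could equally view this as \Cref{lem:CDT10} with an empty part.)
\end{itemize}

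\emph{The case $|X_{uv}|\le 1$.} This case rests on a structural observation that uses only $\alpha(G)\le 2$. Any two vertices both non-adjacent to $u$ must be adjacent, since otherwise together with $u$ they form an independent triple. Hence $V(G)\setminus N_G[u]$ is a clique, and likewise $V(G)\setminus N_G[v]$ is a clique.

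By \eqref{eq:E-decomp}'s preliminary observation, every vertex of $H=G-\{u,v\}$ lies in $N_G(u)\cup N_G(v)$. Therefore
\[
V(G)=A\sqcup B\sqcup X_{uv},\qquad A:=\{u\}\cup\bigl(N_G(u)\setminus X_{uv}\bigr),\qquad B:=\{v\}\cup\bigl(N_G(v)\setminus X_{uv}\bigr).
\]
Here $A\subseteq V(G)\setminus N_G[v]$, because $u\not\sim v$ and the vertices of $N_G(u)\setminus X_{uv}$ are non-neighbours of $v$. So $A$ is a clique, and symmetrically $B$ is a clique.
\begin{itemize}
\item If $X_{uv}=\emptyset$, then $V(G)=A\sqcup B$ is a two-clique partition. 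We apply \Cref{lem:CDT9} when $|A|,|B|\ge 2$, and \Cref{lem:singleton-degenerate} when one of them is a singleton.
\item If $X_{uv}=\{w\}$, then $V(G)-\{w\}=A\sqcup B$ is a two-clique partition, and \Cref{lem:CDT10} gives the bound directly.
\end{itemize}

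\emph{The main obstacle.} The only delicate point is bookkeeping on hypotheses. \Cref{lem:singleton-degenerate}, and the paper's proof of \Cref{lem:CDT10}, are stated for odd $n\ge 7$, while the present lemma allows $n\ge 6$. I would handle even $n$ by appealing to \Cref{thm:CDT-even}. Together with \Cref{prop:bijection}, it gives $t_n=\phi_n$, hence $\rho_\Delta(G)\ge t_n=\phi_n$ for every admissible $G$, so the even case is immediate. The odd case then runs exactly as above. I would also check that the degenerate singleton parts are covered by the cited lemmas, so that no partition falls outside their scope.
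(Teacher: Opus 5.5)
Your proposal is correct and is essentially the paper's proof. The disconnected case goes through \Cref{lem:disconnection-2clique} and \Cref{lem:CDT9}/\Cref{lem:CDT10}. The case $|X_{uv}|\le 1$ uses the same split of $V(G)-\{u,v\}$ into the cliques $N_G(u)\setminus X_{uv}$ and $N_G(v)\setminus X_{uv}$ together with $X_{uv}$. The only cosmetic difference is at $X_{uv}=\emptyset$: you partition all of $V(G)$ and invoke \Cref{lem:CDT9} or \Cref{lem:singleton-degenerate}, whereas the paper deletes $u$ and invokes \Cref{lem:CDT10}.

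Your separate treatment of even $n$ via \Cref{thm:CDT-even} and \Cref{prop:bijection} is valid. It is also more careful than the paper, whose own proofs of \Cref{lem:CDT10} and \Cref{lem:singleton-degenerate} only cover odd $n\ge 7$.
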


\begin{proof}
By assumption there is a non-adjacent pair $u, v \in V(G)$ with $|X_{uv}| \le 1$ or with $H := G - \{u, v\}$ disconnected. Two cases arise.

\textit{Case~(1): $H$ is disconnected.}~\Cref{lem:disconnection-2clique} gives either conclusion~(i), in which case~\Cref{lem:CDT9} yields $\rho_{\Delta}(G) \ge \phi_n$, or conclusion~(ii), in which case~\Cref{lem:CDT10} yields the same.

\textit{Case~(2): $H$ is connected and $|X_{uv}| \le 1$.} Set $A_u := (N_G(u) \cap V(H)) \setminus N_G(v)$ and $A_v := (N_G(v) \cap V(H)) \setminus N_G(u)$. Since $\alpha(G) \le 2$, any vertex of $V(H)$ outside $N_G(u) \cup N_G(v)$ would form an independent triple with $u, v$; hence $V(H) = A_u \sqcup A_v \sqcup X_{uv}$. Since $v$ is non-adjacent to every vertex of $A_u$, a non-edge in $A_u$ together with $v$ would form an independent triple; hence $A_u$ is a clique, and symmetrically $A_v$ is a clique.

\textit{Sub-case~(2.1): $X_{uv} = \emptyset$.} Then $V(G) - \{u\} = A_u \sqcup (A_v \sqcup \{v\})$ partitions into two cliques and~\Cref{lem:CDT10} with $w = u$ applies.

\textit{Sub-case~(2.2): $X_{uv} = \{x\}$.} Then $V(G) - \{x\} = (A_u \sqcup \{u\}) \sqcup (A_v \sqcup \{v\})$ partitions into two cliques and~\Cref{lem:CDT10} with $w = x$ applies.
\end{proof}

\subsection{Case B: Coupling argument}\label{sec:caseB}

This case applies~\Cref{lem:nonbip-trianglefree} by a coupling argument on cross-isolated vertices.

\begin{lemma}\label[lemma]{lem:case-b-general}
Let $n \ge 7$ be odd, and let $G$ be a triangular graph of order $n$ with $\alpha(G) = 2$. Suppose every non-adjacent pair $u, v \in V(G)$ satisfies $|X_{uv}| \ge 2$ and $G - \{u, v\}$ is connected. Then
\[
|E(G)| \;\ge\; \frac{n^2 - 1}{4},
\]
and consequently
\[
\rho_{\Delta}(G) \;\ge\; \left\lceil \frac{n^2 - 1}{12} \right\rceil \;\ge\; \phi_n.
\]
\end{lemma}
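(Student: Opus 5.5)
The plan is to pass to the complement $\overline G$ and count its edges. Since $\alpha(G) \le 2$, the graph $\overline G$ is $K_3$-free. Write $e := |E(\overline G)|$ and $d(x) := \deg_{\overline G}(x)$. The target $|E(G)| \ge (n^2-1)/4$ is equivalent to
\[
e \;\le\; \binom{n}{2} - \frac{n^2-1}{4} \;=\; \frac{(n-1)^2}{4}.
\]
Mantel's theorem alone gives only $\lfloor n^2/4 \rfloor$. \Cref{lem:nonbip-trianglefree} is off by one in the non-bipartite case and says nothing in the bipartite case. So a sharper bound must come from the hypothesis $|X_{uv}| \ge 2$.

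\emph{Step 1: translate $|X_{uv}|$ into a degree condition on $\overline G$.} Fix a non-adjacent pair $u, v$ of $G$, i.e.\ an edge $uv$ of $\overline G$. A vertex $x \notin \{u, v\}$ lies outside $X_{uv}$ exactly when $x \in N_{\overline G}(u) \cup N_{\overline G}(v)$. Since $\overline G$ is triangle-free, $N_{\overline G}(u) \cap N_{\overline G}(v) = \emptyset$. Also $v \in N_{\overline G}(u)$ and $u \in N_{\overline G}(v)$. Hence
\[
|X_{uv}| \;=\; (n-2) - (d(u)-1) - (d(v)-1) \;=\; n - d(u) - d(v).
\]
The Case~B hypothesis $|X_{uv}| \ge 2$ therefore says $d(u) + d(v) \le n-2$ for every edge $uv \in E(\overline G)$.

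\emph{Step 2: sum over edges and apply Cauchy--Schwarz.} Summing Step~1 over all edges of $\overline G$ gives
\[
\sum_{x} d(x)^2 \;=\; \sum_{uv \in E(\overline G)} \bigl(d(u)+d(v)\bigr) \;\le\; (n-2)\,e.
\]
Cauchy--Schwarz gives $\sum_x d(x)^2 \ge (2e)^2/n$. Combining the two, $4e^2/n \le (n-2)e$, so
\[
e \;\le\; \frac{n(n-2)}{4} \;<\; \frac{(n-1)^2}{4}.
\]
This yields $|E(G)| \ge n(n-1)/2 - n(n-2)/4 = n^2/4 \ge (n^2-1)/4$. The argument does not seem to need the connectivity of $G - \{u, v\}$, nor a split into bipartite and non-bipartite $\overline G$. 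If the edge-sum argument turned out to be flawed, I would fall back on that split. For non-bipartite $\overline G$, I would combine \Cref{lem:nonbip-trianglefree} with the degree condition to remove the $+1$. For bipartite $\overline G$ with parts of sizes $p, q$, I would apply Cauchy--Schwarz on each side separately, giving $e \le pq(n-2)/n < (n-1)^2/4$.

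\emph{Step 3: the final inequality.} By \eqref{eq:3D-EG}, $\rho_\Delta(G) \ge |E(G)|/3 \ge (n^2-1)/12$, and since $\rho_\Delta(G)$ is an integer, $\rho_\Delta(G) \ge \lceil (n^2-1)/12 \rceil$. By \eqref{eq:phi-odd}, $\phi_n$ is an integer, so $\lceil (n^2-1)/12 \rceil \ge \phi_n$ is equivalent to $(n^2-1)/12 > \phi_n - 1$. Substituting $\phi_n = (n^2-n+b_n)/12$, this becomes $n > b_n - 11$, which holds since $b_n \le 12$ and $n \ge 7$.

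The only step I expect to need care is Step~1. There I must check that triangle-freeness of $\overline G$ really makes the two $\overline G$-neighbourhoods disjoint, and that $u$ and $v$ are each counted once in the other's neighbourhood. Steps~2 and~3 are routine.
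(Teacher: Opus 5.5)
Your proof is correct, and it takes a genuinely different and much shorter route than the paper.

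The paper fixes one non-adjacent pair $\{u,v\}$, passes to $H = G - \{u,v\}$, and splits on whether $\overline{H}$ is bipartite. In the non-bipartite case it applies Erd\H{o}s's bound (\Cref{lem:nonbip-trianglefree}) to $\overline{H}$. In the bipartite case it runs a coupling argument: the hypothesis $|X_{pq}| \ge 2$ at pairs of cross-isolated vertices $p \in A$, $q \in B$ forces $u, v$ into $N_G(p) \cap N_G(q)$. Connectivity of $H$ is used only to exclude a degenerate subcase.

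You instead use the hypothesis at all non-adjacent pairs at once. Your identity $|X_{uv}| = n - d_{\overline{G}}(u) - d_{\overline{G}}(v)$ is correct, since triangle-freeness of $\overline{G}$ makes the two neighbourhoods disjoint. It turns the hypothesis into $d(u) + d(v) \le n - 2$ on every edge of $\overline{G}$. The classical double-counting and Cauchy--Schwarz proof of Mantel's theorem then gives $|E(\overline{G})| \le n(n-2)/4$.

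Your route buys several things:
\begin{itemize}
\item It needs no case analysis.
\item It makes \Cref{lem:nonbip-trianglefree} dispensable; the paper uses that lemma nowhere else.
\item It never uses connectivity of $G - \{u,v\}$.
\item It needs only the averaged condition $\sum_{uv \in E(\overline{G})} |X_{uv}| \ge 2|E(\overline{G})|$.
\end{itemize}

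It also proves more: $|E(G)| \ge n^2/4$, hence $|E(G)| \ge (n^2+3)/4$ for odd $n$ by integrality. The paper reaches $(n^2+3)/4$ only in its bipartite subcases; its non-bipartite case stops at $(n^2-1)/4$. Consequently every Case~B graph has $\rho_{\Delta}(G) \ge \lceil (n^2+3)/12 \rceil$. This already exceeds $\phi_n$ at $n = 7$ and $n = 11$, so Case~B contributes no extremal graphs there, which sharpens the paper's concluding remark about tightness at $n \in \{7, 9, 11\}$.

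The paper's argument is more local, invoking the hypothesis only at the fixed pair and at cross-isolated pairs. For the lemma as stated, though, yours is strictly more economical, and the fallback you sketch is not needed.
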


\begin{proof}
Fix any non-adjacent pair $u, v \in V(G)$, which exists since $\alpha(G) = 2$. The complement $\overline{H}$ is $K_3$-free, since an independent triple in $H$ would also be independent in $G$, contradicting $\alpha(G) \le 2$. Combining~\eqref{eq:3D-EG} and~\eqref{eq:E-decomp},
\begin{equation}\label{eq:caseB-decomp}
3 \, \rho_{\Delta}(G) \;\ge\; |E(G)| \;\ge\; |E(H)| + (n - 2) + |X_{uv}|.
\end{equation}
Two cases arise on the bipartiteness of $\overline{H}$.

\textit{Case~(1): $\overline{H}$ is non-bipartite.} Applying~\Cref{lem:nonbip-trianglefree} to $\overline{H}$ on $n - 2$ vertices, and dropping the floor (since $(n - 3)^2/4$ is an integer for $n - 3$ even), gives
\[
|E(\overline{H})| \;\le\; \frac{(n - 3)^2}{4} + 1.
\]
Therefore
\begin{align*}
|E(H)| \;=\; \binom{n - 2}{2} - |E(\overline{H})|
       &\;\ge\; \frac{(n - 2)(n - 3)}{2} - \frac{(n - 3)^2}{4} - 1 \\
       &\;=\; \frac{(n - 3)(n - 1)}{4} - 1.
\end{align*}
Substituting into~\eqref{eq:caseB-decomp} and using $|X_{uv}| \ge 2$,
\[
|E(G)| \;\ge\; \frac{(n - 3)(n - 1)}{4} - 1 + (n - 2) + 2 \;=\; \frac{n^2 - 1}{4}.
\]

\textit{Case~(2): $\overline{H}$ is bipartite.} Fix a bipartition $V(H) = A \sqcup B$ of $\overline{H}$ with $|A| = a \le b = |B|$ and $a + b = n - 2$. Let
\begin{equation}\label{eq:r-def}
r \;:=\; ab - |E(\overline{H})|
\end{equation}
denote the number of cross-pairs of $A \times B$ that are \emph{non}-edges of $\overline{H}$, equivalently the number of cross-edges of $H$ between $A$ and $B$. Since $E(H)$ decomposes into the edges of $K_A$, the edges of $K_B$, and exactly $r$ cross-edges between $A$ and $B$,
\begin{equation}\label{eq:EH-bipartite}
|E(H)| \;=\; \binom{a}{2} + \binom{b}{2} + r.
\end{equation}
Three subcases arise on $(a, r)$.

\textit{Sub-case~(2.1): $a = 0$ ($\overline{H}$ edgeless).} Then $H = K_{n - 2}$, and substituting $|E(H)| = \binom{n - 2}{2}$ into~\eqref{eq:caseB-decomp} together with $|X_{uv}| \ge 2$ gives
\[
|E(G)| \;\ge\; \binom{n - 2}{2} + (n - 2) + 2 \;=\; \frac{n^2 - 3n + 6}{2} \;\ge\; \frac{n^2 - 1}{4}
\]
for every $n \ge 7$, since $n^2 - 6n + 13 = (n - 3)^2 + 4 > 0$.

\textit{Sub-case~(2.2): $a \ge 1$ and $r \le a - 1$.} Note that $H$ connectivity forces $r \ge 1$, so $a = 1$ and $r = 0$ do not arise. Since $r < a \le b$, by pigeonhole there exist vertices $p \in A$ and $q \in B$, each incident to no cross-edge of $H$; we call such vertices \emph{cross-isolated}. The pair $\{p, q\}$ is a non-edge of $H$, hence of $G$. The common $H$-neighbours of $\{p, q\}$ are precisely the vertices of $A$ cross-adjacent in $H$ to $q$, together with the vertices of $B$ cross-adjacent in $H$ to $p$; since both $p$ and $q$ are cross-isolated, this common set is empty. Applying the Case~B hypothesis at $\{p, q\}$,
\begin{equation}\label{eq:CB-good-pair}
|N_G(p) \cap N_G(q)| \;\ge\; 2,
\end{equation}
and since no vertex of $V(H)$ contributes to this intersection, both $u$ and $v$ must lie in it:
\begin{equation}\label{eq:uv-in-Nab}
\{u, v\} \;\subseteq\; N_G(p) \cap N_G(q).
\end{equation}
The conclusion~\eqref{eq:uv-in-Nab} holds for every pair $(p, q) \in A \times B$ of cross-isolated vertices. Since each side contains at least one cross-isolated vertex, fixing a witness in $B$ and varying $p \in A$ yields $\{u, v\} \subseteq N_G(p)$ for every cross-isolated $p \in A$. By symmetry, $\{u, v\} \subseteq N_G(q)$ for every cross-isolated $q \in B$. The set of cross-isolated vertices of $A$ has size at least $a - r$, and that of $B$ at least $b - r$; hence
\begin{equation}\label{eq:coupling-bound}
|X_{uv}| \;=\; |N_G(u) \cap N_G(v)| \;\ge\; (a - r) + (b - r) \;=\; (n - 2) - 2r.
\end{equation}
Combining~\eqref{eq:caseB-decomp},~\eqref{eq:EH-bipartite}, and~\eqref{eq:coupling-bound},
\begin{equation}\label{eq:case2-2-pre}
\begin{aligned}
|E(G)| &\;\ge\; \binom{a}{2} + \binom{b}{2} + r + (n - 2) + (n - 2) - 2r \\
       &\;=\; \binom{a}{2} + \binom{b}{2} + 2(n - 2) - r.
\end{aligned}
\end{equation}
The right-hand side of~\eqref{eq:case2-2-pre} is bounded below as follows. By convexity of $\binom{x}{2}$, the sum $\binom{a}{2} + \binom{b}{2}$ is minimised at the balanced integer split $a = (n - 3)/2$, $b = (n - 1)/2$, with value $(n - 3)^2/4$. Moreover, $r \le a - 1 \le (n - 5)/2$ since $a \le (n - 3)/2$. Combining,
\begin{equation}\label{eq:case2-2-final}
|E(G)| \;\ge\; \frac{(n - 3)^2}{4} + 2(n - 2) - \frac{n - 5}{2} \;=\; \frac{n^2 + 3}{4} \;\ge\; \frac{n^2 - 1}{4}.
\end{equation}

\textit{Sub-case~(2.3): $a \ge 1$ and $r \ge a$.} By~\eqref{eq:caseB-decomp},~\eqref{eq:EH-bipartite}, and $|X_{uv}| \ge 2$,
\begin{equation}\label{eq:case2-3-pre}
|E(G)| \;\ge\; \binom{a}{2} + \binom{b}{2} + r + (n - 2) + 2 \;\ge\; \binom{a}{2} + \binom{b}{2} + a + n.
\end{equation}
$\binom{a}{2} + \binom{n - 2 - a}{2} + a + n$ is a quadratic in $a$ minimised at $a = (n - 3)/2$. Since $(n - 3)/2$ is integer ($n$ odd) and coincides with the right endpoint of the domain $\{1, \ldots, (n - 3)/2\}$, the right-hand side is non-increasing on the domain, with minimum at $a = (n - 3)/2$. Substituting,
\begin{equation}\label{eq:case2-3-final}
|E(G)| \;\ge\; \frac{(n - 3)^2}{4} + \frac{n - 3}{2} + n \;=\; \frac{n^2 + 3}{4} \;\ge\; \frac{n^2 - 1}{4}.
\end{equation}

In all cases $|E(G)| \ge (n^2 - 1)/4$. By~\eqref{eq:3D-EG},
\[
\rho_{\Delta}(G) \;\ge\; \left\lceil \frac{|E(G)|}{3} \right\rceil \;\ge\; \left\lceil \frac{n^2 - 1}{12} \right\rceil.
\]
It remains to verify that $\lceil (n^2 - 1)/12 \rceil \ge \phi_n$ for every odd $n \ge 7$. By~\eqref{eq:phi-odd}, this is equivalent to $n^2 - 1 > n^2 - n + b_n - 12$, i.e., $n > b_n - 11$, which holds since $b_n \le 12$.
\end{proof}

\begin{theorem}\label{thm:lower-bound}
For every odd $n \ge 7$ and every triangular graph $G$ of order $n$ with $\alpha(G) \le 2$,
\[
\rho_{\Delta}(G) \;\ge\; \phi_n.
\]
\end{theorem}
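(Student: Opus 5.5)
The plan is to assemble Theorem~\ref{thm:lower-bound} from the results already established, by splitting on $\alpha(G)$ and then on the Case~A/Case~B dichotomy. Fix an odd $n \ge 7$ and a triangular graph $G$ of order $n$ with $\alpha(G) \le 2$.

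First, suppose $\alpha(G) = 1$. Then $G = K_n$, so $\rho_{\Delta}(G) = |C_n|$, and \Cref{lem:clique-case} gives $|C_n| \ge \phi_n$ directly.

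Now suppose $\alpha(G) = 2$, so non-adjacent pairs exist. Either some non-adjacent pair $u, v$ has $|X_{uv}| \le 1$ or has $G - \{u,v\}$ disconnected, or no such pair exists; this is exactly the exhaustive Case~A/Case~B split.

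\emph{Case~A.} We apply \Cref{lem:case-a-general}, whose hypothesis $n \ge 6$ holds. That lemma reduces, via \Cref{lem:disconnection-2clique}, to \Cref{lem:CDT9} and \Cref{lem:CDT10}, the latter proved using \Cref{lem:singleton-degenerate}. It yields $\rho_{\Delta}(G) \ge \phi_n$.

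\emph{Case~B.} Every non-adjacent pair satisfies $|X_{uv}| \ge 2$ with $G-\{u,v\}$ connected. Then \Cref{lem:case-b-general} applies verbatim, since $n$ is odd, $n \ge 7$ and $\alpha(G)=2$. It gives $\rho_{\Delta}(G) \ge \lceil (n^2-1)/12\rceil \ge \phi_n$.

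The only step needing care is the interface with \Cref{lem:case-a-general}. In Case~(1) of its proof, conclusion~(i) of \Cref{lem:disconnection-2clique} must feed into \Cref{lem:CDT9}, which requires both cliques to have size at least~$2$. In the partitions produced there, each part contains $u$ or $v$ together with a nonempty component, so each has size at least~$2$. Should a singleton part arise anyway, \Cref{lem:singleton-degenerate} covers it. I expect no further obstacle: the theorem is a direct combination of these lemmas.
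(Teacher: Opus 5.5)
Your proposal is correct and is essentially the paper's own proof: you handle $\alpha(G)=1$ by \Cref{lem:clique-case}, and for $\alpha(G)=2$ you invoke the exhaustive Case~A/Case~B dichotomy via \Cref{lem:case-a-general} and \Cref{lem:case-b-general}. Your extra check is also correct: conclusion~(i) of \Cref{lem:disconnection-2clique} yields two cliques of size at least~$2$ (each is $\{u\}$ or $\{v\}$ together with a nonempty component), which makes explicit a hypothesis of \Cref{lem:CDT9} that the paper uses silently.
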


\begin{proof}
If $\alpha(G) = 1$, then $G = K_n$ and $\rho_{\Delta}(G) = |C_n| \ge \phi_n$ by~\Cref{lem:clique-case}.

If $\alpha(G) = 2$, the dichotomy of Case~A / Case~B applies, and~\Cref{lem:case-a-general} (for Case~A) or~\Cref{lem:case-b-general} (for Case~B) gives $\rho_{\Delta}(G) \ge \phi_n$.
\end{proof}

\begin{proof}[Proof of~\Cref{thm:main}]
By~\Cref{prop:bijection}, $\gamma(J(n, 3))$ equals the minimum of $\rho_{\Delta}(G)$ over triangular graphs of order $n$ with $\alpha(G) \le 2$. The upper bound $\le \phi_n$ is~\Cref{prop:phi-upper}; the lower bound $\ge \phi_n$ is~\Cref{thm:lower-bound}. Hence $\gamma(J(n, 3)) = \phi_n$.
\end{proof}

\section*{Concluding remarks}

Combining~\Cref{thm:main} with Theorem~11 of~\cite{CornetDravecTorres2025} determines $\gamma(J(n, 3)) = \phi_n$ in closed form for every $n \ge 6$, with $\phi_n$ given by the residue formulas~\eqref{eq:phi-even} (for even $n$) and~\eqref{eq:phi-odd} (for odd $n$), derived from the Fort\textendash{}Hedlund covering numbers~\cite{FortHedlund1958}.

For $J(n, k)$ with $k \ge 4$, the determination of $\gamma$ remains open. Proposition~15 of~\cite{CornetDravecTorres2025} sketches a $(k - 1)$-uniform hypergraph framework that generalises~\Cref{prop:bijection}, but no concrete bound is known; even a leading-order asymptotic seems to require new ideas beyond Mantel's theorem and \Cref{lem:nonbip-trianglefree}.

A natural follow-up to~\Cref{thm:main} is the extremal characterisation of those $2$-shadow graphs $G$ at which the minimum in~\Cref{prop:bijection} is attained. The known extremal family $K_{m_1} \sqcup K_{m_2}$ from~\Cref{prop:phi-upper} provides one such class. In our proof, Case~A closes by~\Cref{lem:CDT9} and~\Cref{lem:CDT10}, whose extremals are essentially this family. Case~B closes by the edge-count bound $|E(G)| \ge (n^2 - 1)/4$, equivalently $\rho_\Delta(G) \ge \lceil (n^2 - 1)/12 \rceil$. This bound is tight, matching $\phi_n$, only at $n \in \{7, 9, 11\}$. For $n \ge 13$ the bound has slack, so additional extremal families may exist beyond the disjoint-clique family. A complete classification, which may depend on $n \bmod 12$, is left to future work.

Conjecture~14 of~\cite{CornetDravecTorres2025} asserts that $\gamma(J(n, r)) < \gamma(J(n + 1, r))$ for every $r \ge 3$ and every $n \ge 2r$. The case $r = 3$ now follows from~\Cref{thm:main} together with~\Cref{thm:CDT-even}: a direct computation from~\eqref{eq:phi-even} and~\eqref{eq:phi-odd} gives $\phi_{n+1} - \phi_n \ge 1$ for every $n \ge 6$, so $\gamma(J(n, 3))$ is strictly increasing in $n$. The cases $r \ge 4$ remain open.

\section*{Acknowledgments}

The work of Semin Oh was supported by Global \textendash{} Learning \& Academic research institution for Master's $\cdot$ PhD students, and Postdocs (G-LAMP) Program of the National Research Foundation of Korea (NRF) grant funded by the Ministry of Education (No.~RS-2023-00301914).

\bibliographystyle{plain}
\bibliography{refs}

\end{document}